\definecolor{ANDREW}{RGB}{255,127,0}
\theoremstyle{plain}
\newtheorem{proposition}{Proposition}[section]
\newtheorem{theorem}[proposition]{Theorem}
\newtheorem{lemma}[proposition]{Lemma}
\newtheorem{corollary}[proposition]{Corollary}
\theoremstyle{definition}
\newtheorem{example}[proposition]{Example}
\newtheorem{definition}[proposition]{Definition}
\newtheorem{observation}[proposition]{Observation}
\theoremstyle{remark}
\newtheorem{remark}[proposition]{Remark}
\newtheorem{conjecture}[proposition]{Conjecture}
\DeclareMathOperator{\Aff}{Aff}
\DeclareMathOperator{\Euc}{Euc}
\DeclareMathOperator{\Cc}{\mathcal{C}}
\DeclareMathOperator{\Hc}{\mathcal{H}}
\DeclareMathOperator{\Cb}{\mathbb{C}}
\DeclareMathOperator{\Db}{\mathbb{D}}
\DeclareMathOperator{\Kb}{\mathbb{K}}
\DeclareMathOperator{\Rb}{\mathbb{R}}
\DeclareMathOperator{\Xb}{\mathbb{X}}
\newcommand{\abs}[1]{\left|#1\right|}
\newcommand{\norm}[1]{\left\|#1\right\|}
\newcommand{\wh}[1]{\widehat{#1}}
\begin{document}

\title[Convex domains not biholomorphic to bounded convex domains]{Kobayashi hyperbolic convex domains not biholomorphic to bounded convex domains}
\author{Andrew Zimmer}\address{Department of Mathematics, Louisiana State University, Baton Rouge, LA, USA}
\email{amzimmer@lsu.edu}
\date{\today}
\keywords{}
\subjclass[2010]{}

\begin{abstract} We construct families of convex domains that are biholomorphic to bounded domains, but not bounded convex domains. This is accomplished by finding an obstruction related to the Gromov hyperbolicity of the Kobayashi metric. 
\end{abstract}

\maketitle

\section{Introduction}

 In this paper we construct unbounded convex domains which are Kobayashi hyperbolic, but not biholomorphic to bounded convex domains. This provides a partial answer to a question asked by Forn{\ae}ss-K.T. Kim~\cite[Problem 14]{FK2015} and a negative answer to a question asked by Bracci~\cite{B2020} and Pflug-Zwonek~\cite[Section 5]{PZ2018}.

Kobayashi hyperbolic convex domains are always biholomorphic to bounded domains and share many complex analytic/geometric properties with bounded convex domains~\cite{B1980, BS2009}. Thus there are no obvious obstructions that would prevent a given  Kobayashi hyperbolic convex domain from being biholomorphic to a bounded convex domain. 

We will find such obstructions by considering the asymptotic geometry of the Kobayashi distance $K_\Omega$ on convex domains $\Omega$. We will show that for bounded convex domains, the Kobayashi metric is Gromov hyperbolic if and only if $\Db \times \Db$ does not ``asymptotically isometrically embed'' into the domain. Then we will show that this equivalence fails for certain unbounded Kobayashi hyperbolic convex domains and thus these domains cannot be biholomorphic to bounded convex domains.  
 
Our main result is stated in terms of the geometry of the Hilbert metric $H_C$ on the base $C \subset \Rb^d$ of a convex tube domain. 
  
\begin{theorem}\label{thm:main} If $d \geq 2$, $C \subset \Rb^d$ is a bounded convex domain, and $(C,H_C)$ is Gromov hyperbolic, then $\Omega:=C+i\Rb^d \subset \Cb^d$ is not biholomorphic to a bounded convex domain in $\Cb^d$. 
\end{theorem}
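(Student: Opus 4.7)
The strategy is to apply the characterization announced in the introduction: for a bounded convex domain $\Omega'\subset\Cb^n$, the Kobayashi metric is Gromov hyperbolic if and only if $\Db\times\Db$ does not asymptotically isometrically embed into $(\Omega',K_{\Omega'})$. Both conditions are biholomorphic invariants, so if $\Omega=C+i\Rb^d$ were biholomorphic to a bounded convex domain, the equivalence would apply to $\Omega$ as well. It therefore suffices to verify, for the tube $\Omega$, both (a) $(\Omega,K_\Omega)$ is Gromov hyperbolic and (b) $\Db\times\Db$ asymptotically isometrically embeds into $(\Omega,K_\Omega)$. Together these violate the equivalence and preclude $\Omega$ from being biholomorphic to a bounded convex domain.

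For (a), I would compare $K_\Omega$ with $H_C$. Each real linear functional $\ell:\Rb^d\to\Rb$ extends to a $\Cb$-linear surjection $\Omega\to \ell(C)+i\Rb$ onto a strip, and passing to the supremum over $\ell$ recovers the Hilbert distance from the hyperbolic strip distances, so $K_\Omega(x+iy, x'+iy')\geq \tfrac{1}{2}H_C(x, x')$. A matching upper bound follows from lifting straight-line complex geodesics in $C$ to holomorphic disks in $\Omega$, absorbing any imaginary discrepancy into a bounded correction via the $i\Rb^d$-translation action. This yields a large-scale decomposition of $K_\Omega$ in terms of $H_C$ plus a translation-invariant correction, from which the thin-triangle property transfers from $(C,H_C)$ to $(\Omega,K_\Omega)$.

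For (b), the key feature is that each translation $z\mapsto z+iy$ with $y\in\Rb^d$ is an automorphism of $\Omega$, hence an isometry of $K_\Omega$. Since $d\geq 2$, pick linearly independent $v_1,v_2\in\Rb^d$ and an interior point $x_0\in C$. The complex affine two-plane $\Pi=x_0+\Span_\Cb\{v_1,v_2\}$ meets $\Omega$ in a product of two complex strips, hence in a domain biholomorphic to $\Db\times\Db$. The inclusion $\Pi\cap\Omega\hookrightarrow\Omega$ is automatically $K$-decreasing; for the reverse Kobayashi bound I would use the holomorphic projection of $\Omega$ onto the tube over the image of $C$ in $\Span\{v_1,v_2\}$, together with the imaginary translation action in the $v_1,v_2$ directions, to show that arbitrarily large balls in $\Db\times\Db$ embed into $\Omega$ with only bounded additive distortion.

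The main obstacle is the lower bound in (b): establishing $K_\Omega(p,q)\gtrsim K_{\Db\times\Db}(p,q)$ along the two-plane amounts to showing that holomorphic disks in the much larger ambient $\Omega$ cannot substantially shortcut between points of $\Pi\cap\Omega$. This is delicate and requires a careful interplay between the translation invariance and the convex geometry of $C$. Step (a), while technical, is more direct from known comparisons between $K_\Omega$ and $H_C$. Once both (a) and (b) are in hand, the announced characterization for bounded convex domains yields the theorem by contradiction.
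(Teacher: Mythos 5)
There is a genuine gap: both of the facts you propose to verify are false, and in fact they cannot both be true for \emph{any} Kobayashi hyperbolic complex manifold. Your claim (a) fails because $C$ is \emph{bounded}: for fixed $c_0\in C$ the map $y\in(\Rb^d,d_{\Euc})\mapsto c_0+iy\in(\Omega,K_\Omega)$ is a quasi-isometric embedding (the infinitesimal Kobayashi metric along the purely imaginary directions is pinched between two positive constants, since the Euclidean distance from $c_0+iy$ to $\partial\Omega$ is uniformly bounded above and below), so $(\Omega,K_\Omega)$ contains a quasi-isometric copy of a Euclidean flat of rank $d\geq 2$ and is therefore \emph{never} Gromov hyperbolic. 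The ``bounded correction'' you invoke to absorb the imaginary discrepancy does not exist: $K_\Omega(c_0+iy,c_0+iy')$ grows linearly in $\norm{y-y'}$ while $H_C(c_0,c_0)=0$. Your claim (b) fails as well: the slice $\Pi\cap\Omega$ is a tube over a two-dimensional bounded convex slice of $C$, not in general a product of strips, and more importantly the asymptotic isometric embedding of $\Db\times\Db$ is equivalent (via the characterization in terms of non-simple boundaries of limits under the affine group, Benoist's theorem, and the Fu--Straube result) to $(C,H_C)$ \emph{not} being Gromov hyperbolic --- the opposite of your hypothesis. Finally, note that the implication ``Gromov hyperbolic $\Rightarrow$ no asymptotic isometric embedding of $\Db\times\Db$'' holds for all complex manifolds (the bidisk contains arbitrarily fat flat triangles), so a plan that establishes both (a) and (b) simultaneously is internally inconsistent: it would prove a contradiction outright rather than contradict the hypothesis that $\Omega$ is biholomorphic to a bounded convex domain.

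The correct argument is the mirror image of yours. One shows that \emph{both} properties fail for $\Omega$: $(\Omega,K_\Omega)$ is not Gromov hyperbolic (by the flat described above, since $C$ is bounded and $d\geq2$), and $\Db\times\Db$ does \emph{not} asymptotically isometrically embed into $\Omega$ (because $(C,H_C)$ Gromov hyperbolic forces every affine limit of $C$ to be strictly convex, hence every affine limit of $\Omega$ to have simple boundary, hence no bidisk embedding). This violates the hard direction of the equivalence for bounded convex domains --- namely that absence of an asymptotic bidisk forces Gromov hyperbolicity --- and that direction, not the trivial one, is what prevents $\Omega$ from being biholomorphic to a bounded convex domain. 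Both facts are invariant under biholomorphism since they are phrased purely in terms of $K_\Omega$, so the contradiction goes through.
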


Benoist~\cite{B2003} has characterized the bounded convex domains in $\Rb^d$ with Gromov hyperbolic Hilbert metric and from his classification we have the following corollaries. 

\begin{corollary}If $d \geq 2$ and $C \subset \Rb^d$ is a bounded strongly convex domain, then $\Omega:=C+i\Rb^d$ is not biholomorphic to a bounded convex domain in $\Cb^d$. 
\end{corollary}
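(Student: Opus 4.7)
The plan is to derive this as an immediate consequence of Theorem \ref{thm:main}. The dimension hypothesis is identical, and strong convexity implies that $C$ is a bounded convex domain; so the only thing that needs to be verified is that the Hilbert metric $H_C$ on a bounded strongly convex domain $C \subset \Rb^d$ is Gromov hyperbolic.

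For this I would appeal directly to Benoist's classification \cite{B2003}, which is cited in the sentence preceding the corollary. Strong convexity means that $\partial C$ is $C^2$ with everywhere positive definite second fundamental form; equivalently, at every boundary point $p \in \partial C$ there exist an interior osculating ball and an exterior supporting ball of comparable radii, uniformly in $p$. This two-sided osculating ball condition is considerably stronger than any of the regularity/convexity hypotheses appearing in Benoist's list of Gromov hyperbolic Hilbert geometries, so Gromov hyperbolicity of $(C,H_C)$ is immediate.

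As a sanity check that does not rely on Benoist, one can also argue concretely. The monotonicity of the Hilbert metric under inclusion of convex sets, combined with the uniform two-sided osculating ball condition, gives a uniform local comparison of $H_C$ near the boundary with the Hilbert metric on a round ball, which is isometric to real hyperbolic space. Patching these local comparisons with the fact that $H_C$ is a proper, geodesic metric gives a quasi-isometric embedding of $(C,H_C)$ into real hyperbolic space, and Gromov hyperbolicity is invariant under quasi-isometry.

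The only real obstacle here is bookkeeping: confirming the precise statement of Benoist's theorem and observing that strong convexity is manifestly a special case. Once Gromov hyperbolicity of $(C,H_C)$ is established, Theorem \ref{thm:main} closes the argument with no further work.
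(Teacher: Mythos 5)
Your proposal is correct and is essentially the paper's intended argument: the paper offers no separate proof of this corollary, deriving it directly from Benoist's classification of bounded convex domains with Gromov hyperbolic Hilbert metric (which covers the strongly convex case) together with Theorem~\ref{thm:main}. Your auxiliary ``sanity check'' via patching local comparisons into a quasi-isometric embedding is sketchier than you suggest (globalizing the local comparisons takes real work), but it is not needed, since the appeal to Benoist already closes the argument.
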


\begin{corollary}If $d \geq 2$ and $C \subset \Rb^d$ is a bounded convex domain with real analytic boundary, then $\Omega:=C+i\Rb^d$ is not biholomorphic to a bounded convex domain in $\Cb^d$. 
\end{corollary}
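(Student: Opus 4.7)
The strategy is to combine Theorem~\ref{thm:main} with Benoist's classification: it suffices to prove that a bounded convex domain $C \subset \Rb^d$ with real analytic boundary has Gromov hyperbolic Hilbert geometry. Benoist's criterion says, roughly, that $(C,H_C)$ is Gromov hyperbolic exactly when $\partial C$ contains no nontrivial line segments and the contact of $\partial C$ with its tangent hyperplanes is of uniformly finite order. My plan is to verify both conditions from real analyticity together with compactness of $\partial C$.

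First I would rule out line segments in $\partial C$. If $[p,q] \subset \partial C$ were such a segment, then near $p$ a real analytic local defining function $\rho$ for $\partial C$ would vanish along $[p,q]$. Restricting $\rho$ to the affine line $\ell$ through $p$ and $q$ gives a one-variable real analytic function with a zero set containing a limit point, hence identically zero, so $\ell$ is locally contained in $\partial C$; convexity of $C$ then propagates the segment globally and contradicts boundedness of $C$. Second, I would establish uniform finite order of contact. At each $p \in \partial C$, strict convexity forces the restriction of $\rho$ to any line in the supporting hyperplane at $p$ to be a nonzero real analytic function vanishing at $p$, whose order of vanishing is a finite positive even integer $m_p$. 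Real analyticity makes $p \mapsto m_p$ upper semicontinuous on the compact manifold $\partial C$, so $\{m_p : p \in \partial C\}$ is uniformly bounded.

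With strict convexity and uniform finite order of contact in hand, Benoist's classification yields that $(C,H_C)$ is Gromov hyperbolic, and Theorem~\ref{thm:main} supplies the conclusion. The main technical step is aligning what real analyticity provides with the precise formulation of Benoist's quasi-symmetric convexity condition; the compactness-based uniformization of the order of contact is the key use of the global real analytic structure of $\partial C$ rather than merely its smoothness, and this is where I expect to have to be most careful in matching constants and definitions with those of \cite{B2003}.
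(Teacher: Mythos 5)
Your overall reduction is exactly the paper's: apply Theorem~\ref{thm:main} after showing that a bounded convex domain with real analytic boundary has Gromov hyperbolic Hilbert metric. The paper obtains that last fact by citing Benoist's classification directly -- in \cite{B2003} Benoist proves, as a corollary of his quasisymmetric-convexity criterion, that a bounded convex domain whose boundary is a real analytic hypersurface is Gromov hyperbolic for $H_C$. The gap in your proposal is in the middle step: you paraphrase Benoist's criterion as ``no segments in $\partial C$ plus uniformly finite order of contact,'' but that is not his criterion. His characterization is quasisymmetric convexity of the local graphing functions $u$ of $\partial C$, a two-sided comparability condition of the form $u(x+v)-u(x)-du_x(v)\leq H\left(u(x-v)-u(x)+du_x(v)\right)$ holding uniformly over \emph{all} admissible $x$ and $v$, not merely asymptotically as $v\to 0$ at each point. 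Strict convexity together with a uniform bound on the pointwise order of vanishing does not obviously imply this (the constants and the radii of validity of the local Taylor expansions both vary with the base point, and the condition must hold for $v$ of all sizes), and you do not supply the argument; you explicitly defer it as ``the main technical step.'' That deferred step is precisely the content of Benoist's corollary for analytic boundaries, which he establishes using the structure of real analytic sets, so as written your proof re-derives nothing and omits the one nontrivial ingredient. The fix is simply to cite that corollary of \cite{B2003} rather than attempt to reconstruct it from a simplified surrogate condition.

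Two smaller points. First, your upper-semicontinuity/compactness argument does bound the pointwise line type, but bounding the type is not the issue for the reason above. Second, in ruling out segments, it is analytic continuation along the maximal subinterval of $\ell\cap\partial C$ (whose endpoints would again be analytic points of $\partial C$), not convexity, that propagates the segment; ``convexity of $C$ then propagates the segment globally'' should be replaced by that maximality argument before invoking boundedness of $C$.
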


To prove Theorem~\ref{thm:main} we introduce the following asymptotic invariant of a Kobayashi hyperbolic complex manifold.

\begin{definition} Suppose $M,N$ are Kobayashi hyperbolic complex manifolds (e.g. bounded domains in $\Cb^d$). We say that $M$ \emph{asymptotically isometrically embeds into $N$} if there exists a sequence of holomorphic maps $f_n : M \rightarrow N$ such that 
\begin{align*}
K_{M}(z,w)=\lim_{n\rightarrow\infty} K_N( f_n(z),f_n(w))
\end{align*}
locally uniformly on $M \times M$.
\end{definition} 

Using results from~\cite{Z2019b} we will show that for bounded convex domains, the Kobayashi metric is Gromov hyperbolic if and only if $\Db \times \Db$ does not asymptotically isometrically embed into $\Omega$. 

\begin{proposition}\label{prop:bd_convex_char} Suppose $\Omega \subset \Cb^d$ is a bounded convex domain. Then the following are equivalent: 
\begin{enumerate}
\item $(\Omega, K_\Omega)$ is Gromov hyperbolic 
\item $\Db \times \Db$ does not asymptotically isometrically embed into $\Omega$.
\end{enumerate}
\end{proposition}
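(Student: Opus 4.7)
The plan is to prove the two implications separately. The forward direction (1) $\Rightarrow$ (2) is a soft consequence of the stability theory for Gromov hyperbolic spaces, while the reverse (2) $\Rightarrow$ (1) is the substantive direction and draws on the rescaling/structure results of~\cite{Z2019b}.

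For (1) $\Rightarrow$ (2), I would argue by contradiction. Suppose $(\Omega, K_\Omega)$ is $\delta$-hyperbolic and that $f_n : \Db \times \Db \to \Omega$ is an asymptotic isometric embedding. The bidisk $\Db \times \Db$, whose Kobayashi metric is the maximum of the two factor metrics, contains arbitrarily ``fat'' geodesic triangles: for any $N > 0$ one can fix vertices $p, q, r$ together with a point $x$ on the geodesic side $[p,q]$ satisfying $K_{\Db \times \Db}(x, [p,r] \cup [q,r]) > N$. Since the union of the three sides is a compact subset of $\Db \times \Db$, the local uniform convergence sharpens to $\sup_{z, w} \abs{K_\Omega(f_n(z), f_n(w)) - K_{\Db \times \Db}(z, w)} \leq \epsilon_n \to 0$ on this set. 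Thus each image curve $f_n([p,q])$, $f_n([q,r])$, $f_n([p,r])$ is a $(1, \epsilon_n)$-quasi-geodesic in $\Omega$, and by the Morse stability lemma in $\delta$-hyperbolic spaces, lies within Hausdorff distance $D = D(\delta)$ of an honest geodesic joining its endpoints. The $\delta$-thin triangle property applied to the resulting honest geodesic triangle in $\Omega$ forces $f_n(x)$ to lie within $\delta + 2D$ of $f_n([p,r] \cup [q,r])$; transferring this back through the asymptotic isometry on the compact union of sides yields $K_{\Db \times \Db}(x, [p,r] \cup [q,r]) \leq \delta + 2D + o(1)$, contradicting the choice $N > \delta + 2D + 1$.

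For (2) $\Rightarrow$ (1), I would invoke the rescaling framework of~\cite{Z2019b}. The precise input needed is: when $\Omega$ is a bounded convex domain whose Kobayashi metric is not Gromov hyperbolic, there exist $A_n \in \Aff(\Cb^d)$ such that the rescaled domains $A_n(\Omega)$ converge in the local Hausdorff topology to a convex domain $\wt{\Omega}$ admitting a holomorphic, Kobayashi-isometric embedding $\iota : \Db \times \Db \hookrightarrow \wt{\Omega}$ (a complex-affine bidisk factor of $\wt{\Omega}$ arising from a two-dimensional flat direction captured in the blow-up limit). Given this, after a standard diagonal argument composing $A_n^{-1}$ with slightly shrunken copies $\iota_n$ of $\iota$ to guarantee that images land inside $\Omega$, one obtains holomorphic maps $f_n : \Db \times \Db \to \Omega$ satisfying $K_\Omega(f_n(z), f_n(w)) = K_{A_n(\Omega)}(\iota_n(z), \iota_n(w)) \to K_{\wt{\Omega}}(\iota(z), \iota(w)) = K_{\Db \times \Db}(z, w)$, locally uniformly on $(\Db \times \Db)^2$, where the middle convergence uses continuity of the Kobayashi distance under local Hausdorff convergence of bounded convex domains. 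This is exactly the asymptotic isometric embedding required.

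The main obstacle lies in the reverse direction: extracting from~\cite{Z2019b} the precise rescaling statement that failure of Gromov hyperbolicity yields a limit domain $\wt{\Omega}$ containing a Kobayashi-\emph{isometric} (as opposed to merely holomorphic) embedded copy of $\Db \times \Db$. The point is not the mere presence of a complex bidisk in the boundary of $\wt{\Omega}$, but that the isometric character of the embedding survives the limit. Once this input is secured, the remaining assembly — the diagonal/exhaustion argument producing $f_n$ into $\Omega$ itself, the continuity of $K$ under Hausdorff convergence — is routine. A secondary subtlety in the forward direction, the upgrade from asymptotic isometry to the quasi-geodesic hypothesis of the Morse lemma, is automatic on the compact sides of a fixed triangle.
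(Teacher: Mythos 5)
Your forward direction is correct and genuinely different from the paper's route. The paper derives $(1)\Rightarrow(2)$ by combining \cite[Theorem 1.5]{Z2019b} (for bounded convex $\Omega$, Gromov hyperbolicity of $K_\Omega$ is equivalent to every domain in $\overline{\Aff(\Cb^d)\cdot\Omega}^{\Xb_d}$ having simple boundary) with the implication $(2)\Rightarrow(1)$ of Theorem~\ref{thm:asym_embedding}, which is proved via Fatou's theorem and the Luzin--Privalov radial uniqueness theorem. Your fat-triangles-plus-Morse-lemma argument is more elementary and, as the paper itself remarks, establishes $(1)\Rightarrow(2)$ for arbitrary Kobayashi hyperbolic complex manifolds; the only point to make explicit is that the Morse stability lemma requires $(\Omega,K_\Omega)$ to be a proper geodesic metric space, which is Barth's theorem for bounded convex domains.

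The reverse direction, however, contains a genuine gap, and it sits exactly where you flag ``the main obstacle.'' The input you propose to invoke --- that failure of Gromov hyperbolicity yields a rescaled limit $\wt{\Omega}$ admitting a holomorphic, Kobayashi-\emph{isometric} embedding of $\Db\times\Db$ --- is not a theorem of \cite{Z2019b}. What \cite[Theorem 1.5]{Z2019b} provides is only that some $D\in\overline{\Aff(\Cb^d)\cdot\Omega}^{\Xb_d}$ has non-simple boundary, i.e.\ its boundary contains a non-constant analytic disk. Upgrading that boundary disk to an isometrically embedded bidisk is the main technical content of the present paper's Theorem~\ref{thm:asym_embedding}, $(1)\Rightarrow(2)$: one first applies \cite[Theorem 7.4]{GZ2020} to rescale further to a domain $D_2$ with $D_2\cap\left(\Cb^2\times\{0\}\right)=\Hc^2\times\{0\}$, and then shows this slice is isometric for the Kobayashi metric by producing, for each factor, a supporting real hyperplane $H\supset \Rb e_1+\Cb e_2$ disjoint from $D_2$ and a complex-linear functional $\rho$ with $\rho(D_2)=\Hc$ and $\rho^{-1}(0)\supset\Cb e_2$, giving $K_{D_2}\geq\max\{K_{\Hc}(z_1,w_1),K_{\Hc}(z_2,w_2)\}=K_{\Hc^2}$ on the slice. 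Without supplying this step, your proof of $(2)\Rightarrow(1)$ rests on a citation of a statement that does not exist in the cited form; the remaining assembly you describe (the shrinking/diagonal argument producing the maps $f_n$ into $\Omega$ and the continuity of $K$ under local Hausdorff convergence) is indeed routine and matches the paper.
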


\begin{remark} The implication $(1) \Rightarrow (2)$ holds for general complex manifolds. As we will see below, the reverse implication fails already for unbounded Kobayashi hyperbolic convex domains. 
\end{remark}

As an aside, we mention the following connection to finite type conditions. In the case when $\Omega \subset \Cb^d$ is a bounded convex domain with $\Cc^\infty$ boundary, the Kobayashi metric is Gromov hyperbolic if and only if $\partial\Omega$ has finite type in the sense of D'Angelo~\cite{Z2016}. Hence we have following corollary. 

\begin{corollary}Suppose $\Omega \subset \Cb^d$ is a bounded convex domain with $\Cc^\infty$ boundary. Then the following are equivalent:
\begin{enumerate}
\item $\partial\Omega$ has finite type in the sense of D'Angelo,
\item $\Db \times \Db$ does not asymptotically isometrically embed into $\Omega$.
\end{enumerate}
\end{corollary}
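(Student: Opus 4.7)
The plan is to obtain this corollary by chaining two equivalences, both already available. First I would invoke Proposition~\ref{prop:bd_convex_char}: since $\Omega$ is a bounded convex domain, condition (2) --- that $\Db \times \Db$ does not asymptotically isometrically embed into $\Omega$ --- is equivalent to the Gromov hyperbolicity of $(\Omega, K_\Omega)$. This turns the embedding-theoretic statement in (2) into an intrinsic geometric property of the Kobayashi metric.

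Next I would invoke the characterization recalled from~\cite{Z2016} in the paragraph immediately preceding the corollary: when $\Omega \subset \Cb^d$ is a bounded convex domain with $\Cc^\infty$ boundary, $(\Omega, K_\Omega)$ is Gromov hyperbolic if and only if $\partial\Omega$ has finite type in the sense of D'Angelo. Applied to our $\Omega$, this turns condition (1) into the same statement about Gromov hyperbolicity of $(\Omega, K_\Omega)$.

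Concatenating the two biconditionals yields $(1) \Leftrightarrow (2)$. Unpacking each direction: to prove $(1) \Rightarrow (2)$, finite D'Angelo type gives Gromov hyperbolicity by~\cite{Z2016}, and then the easy (general-complex-manifold) direction of Proposition~\ref{prop:bd_convex_char} rules out an asymptotically isometric embedding of $\Db \times \Db$; conversely, for $(2) \Rightarrow (1)$, the hard (bounded-convex) direction of Proposition~\ref{prop:bd_convex_char} produces Gromov hyperbolicity from the non-embedding hypothesis, and then~\cite{Z2016} returns finite D'Angelo type.

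There is no real obstacle to overcome at this stage: the genuinely non-trivial content lives in Proposition~\ref{prop:bd_convex_char} (which relies on the analysis of~\cite{Z2019b}) and in the cited Gromov-hyperbolicity/finite-type dictionary of~\cite{Z2016}, both of which are taken as given here. The corollary is essentially a bookkeeping statement recording that, in the $\Cc^\infty$-boundary setting, the asymptotic invariant introduced in this paper detects precisely the D'Angelo finite type condition.
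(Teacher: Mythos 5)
Your proposal is correct and matches the paper's (implicit) argument exactly: the corollary is obtained by composing Proposition~\ref{prop:bd_convex_char} with the finite-type/Gromov-hyperbolicity equivalence from~\cite{Z2016} quoted just before the statement. Nothing further is needed.
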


Using a result of Benoist~\cite{B2003} we will also characterize the tube domains which admit asymptotic isometric embeddings of $\Db \times \Db$. 

\begin{proposition}\label{prop:TD_char} Suppose $d \geq 2$, $C \subset \Rb^d$ is a $\Rb$-properly convex domain, and $\Omega : = C + i \Rb^d$. Then the following are equivalent: 
\begin{enumerate} 
\item $(C,H_C)$ is Gromov hyperbolic,
\item $\Db \times \Db$ does not asymptotically isometrically embed into $\Omega$.
\end{enumerate}
\end{proposition}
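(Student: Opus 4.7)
The plan is to exploit the tight connection between the Hilbert metric on $C$ and the Kobayashi metric on the tube $\Omega = C + i\Rb^d$. Two facts drive the argument: (a) $K_\Omega$ is invariant under the imaginary translation group $i\Rb^d$; and (b) the projection $\Omega \to C$ taking real parts satisfies the classical tube comparison
\[
K_\Omega(x+iy,\, x'+iy') \geq \tfrac{1}{2} H_C(x,x'),
\]
with equality when $y=y'$. Together these essentially identify $(\Omega, K_\Omega)$, up to the imaginary-translation action, with $(C, \tfrac{1}{2}H_C)$.

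For the direction $(1) \Rightarrow (2)$: assuming $(C,H_C)$ is Gromov hyperbolic, the comparison above would propagate Gromov hyperbolicity to $(\Omega, K_\Omega)$ (the imaginary translation factor contributes only a bounded additive correction along geodesics, which should not destroy thinness of triangles). Then by the remark following Proposition~\ref{prop:bd_convex_char}, which records the general implication for Kobayashi hyperbolic complex manifolds, $\Db \times \Db$ cannot asymptotically isometrically embed into $\Omega$.

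For the direction $(2) \Rightarrow (1)$: I would argue the contrapositive. If $(C,H_C)$ is not Gromov hyperbolic, Benoist's theorem from \cite{B2003} produces a quasi-flat 2-dimensional piece in the asymptotic geometry of $(C,H_C)$. Concretely, this should mean there is a sequence of affine maps $A_n \in \Aff(\Rb^d)$ so that the rescaled convex domains $A_n(C)$ converge in the local Hausdorff topology to a limit convex domain $C_\infty$ that contains an affine copy of a product $I_1 \times I_2$ of two bounded open intervals. The tube $(I_1 \times I_2) + i\Rb^2$ is biholomorphic to $\Db \times \Db$ (each infinite strip being biholomorphic to the disk via an exponential map). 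Extending to a holomorphic embedding $\Db \times \Db \hookrightarrow C_\infty + i\Rb^d$ and precomposing with the natural complex-affine biholomorphisms $\Omega \to A_n(C) + i\Rb^d$ yields the desired sequence of holomorphic maps $f_n : \Db \times \Db \to \Omega$. Local uniform convergence of $K_\Omega(f_n(z), f_n(w))$ to $K_{\Db \times \Db}(z,w)$ should follow from the stability of the Kobayashi metric on convex domains under Hausdorff convergence, along the lines of \cite{Z2019b}.

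The main obstacle is the contrapositive direction. Producing the product structure $I_1 \times I_2$ in an affine blow-up limit of $C$ from Benoist's criterion is the crucial geometric input, and verifying that the Kobayashi distances survive the rescaling limit requires care — in particular, since tube domains are unbounded, the convex-Hausdorff convergence results for the Kobayashi metric must be adapted to handle the added $i\Rb^d$-direction. The imaginary-translation symmetry of tube domains should make the normal families argument cleaner than in the general convex setting, but extracting the precise form of ``asymptotic flatness'' from \cite{B2003} and matching it with the holomorphic structure of the $f_n$ is where the technical heart of the proof lies.
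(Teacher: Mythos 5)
Your proof of $(1)\Rightarrow(2)$ rests on a false claim. You assert that Gromov hyperbolicity of $(C,H_C)$ propagates to $(\Omega,K_\Omega)$ because ``the imaginary translation factor contributes only a bounded additive correction.'' It does not: the imaginary directions form an unbounded flat. By Theorem~\ref{thm:TD_gromov}, $(\Omega,K_\Omega)$ is Gromov hyperbolic if and only if $(C,H_C)$ is Gromov hyperbolic \emph{and} $C$ is unbounded; when $C$ is bounded the map $y\mapsto c_0+iy$ is a quasi-isometric embedding of $(\Rb^d,d_{\Euc})$ into $(\Omega,K_\Omega)$, which kills hyperbolicity. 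This is not a technicality --- it is the entire point of the paper: Theorem~\ref{thm:main} needs Proposition~\ref{prop:TD_char} to hold precisely in the regime where $(\Omega,K_\Omega)$ is \emph{not} Gromov hyperbolic, so $(1)\Rightarrow(2)$ cannot be routed through hyperbolicity of $K_\Omega$ plus the remark after Proposition~\ref{prop:bd_convex_char}. The correct route (the paper's) is to translate $(1)$ into a statement about affine orbit closures: by Theorem~\ref{thm:benoist_thm}, $(C,H_C)$ Gromov hyperbolic means every limit of $\Aff(\Rb^d)$-rescalings of $C$ is strictly convex; by \cite[Lemma 19.6]{Z2019b} together with Fu--Straube this is equivalent to every limit of $\Aff(\Cb^d)$-rescalings of $\Omega$ having simple boundary; and Theorem~\ref{thm:asym_embedding} converts that into the non-existence of an asymptotic isometric embedding of $\Db\times\Db$.

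Your $(2)\Rightarrow(1)$ direction has a second, independent gap. Producing a holomorphic embedding of $\Db\times\Db$ into a rescaling limit of $\Omega$ is vacuous: every convex domain in $\Cb^d$ with $d\geq 2$ contains an affine copy of a product of two strips, hence a biholomorphic copy of $\Db\times\Db$. The content of condition $(2)$ is that the maps $f_n$ become \emph{isometric} for the Kobayashi distances, and nothing in your construction addresses this. Moreover, Benoist's criterion gives you a nontrivial segment in the \emph{boundary} of a limit domain $C_\infty$ (failure of strict convexity), not a distinguished product inside $C_\infty$; the isometry is then forced by that boundary flat, via the argument in the proof of Theorem~\ref{thm:asym_embedding}: after the normalization of \cite[Theorem 7.4]{GZ2020} the limit domain meets a coordinate $\Cb^2$ exactly in $\Hc^2$, and a supporting-real-hyperplane/projection argument shows the inclusion $\Hc^2\times\{0\}\hookrightarrow D_2$ is a Kobayashi isometry onto its image. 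Without that step (or an equivalent), your $f_n$ are merely holomorphic embeddings and condition $(2)$ is not contradicted.
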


Finally, we will use the following characterization of tube domains with Gromov hyperbolic Kobayashi metric.

\begin{theorem}\cite[Corollary 1.13]{Z2019b}\label{thm:TD_gromov} Suppose $d \geq 2$, $C \subset \Rb^d$ is a $\Rb$-properly convex domain, and $\Omega : = C + i \Rb^d$. Then the following are equivalent: 
\begin{enumerate} 
\item $(\Omega, K_\Omega)$ is Gromov hyperbolic,
\item $(C,H_C)$ is Gromov hyperbolic and $C$ is unbounded.
\end{enumerate}
\end{theorem}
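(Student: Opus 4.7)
The approach is to compare the Kobayashi metric on the tube $\Omega = C + i\Rb^d$ with a warped-product-type metric built from the Hilbert metric on the base $C$ and the Euclidean metric on the imaginary fiber. Because the group of imaginary translations $z \mapsto z + iv$ acts on $\Omega$ by holomorphic automorphisms, $K_\Omega$ is translation-invariant in the imaginary coordinate, so the comparison reduces to controlling $K_\Omega$ on real slices and on imaginary fibers. The first step is to establish an estimate of the form
\begin{equation*}
K_\Omega(x_1 + i y_1,\, x_2 + i y_2) \asymp H_C(x_1, x_2) + \log\!\Bigl(1 + \tfrac{|y_1 - y_2|}{r(x_1, x_2)}\Bigr),
\end{equation*}
where $r(x_1, x_2)$ is comparable to the Euclidean distance from the real segment $[x_1, x_2]$ to $\partial C$. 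The lower bound would use that the real-part projection $\pi \colon \Omega \to C$ is $1$-Lipschitz from $K_\Omega$ to $H_C$, together with a one-dimensional strip comparison in each imaginary fiber. The upper bound would come from explicit holomorphic disks obtained by composing a Hilbert geodesic in $C$ with imaginary translation; the convexity of $C$ enters decisively here through Lempert-type extremal disks.

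With this comparison in hand, the implication $(1) \Rightarrow (2)$ follows quickly. The real embedding $x \mapsto x + i 0$ is a quasi-isometry of $(C, H_C)$ onto $C \times \{0\} \subset \Omega$, so Gromov hyperbolicity of $(\Omega, K_\Omega)$ pulls back to $(C, H_C)$. If in addition $C$ were bounded, then some Euclidean ball $B(x_0, \epsilon) \subset C$ would yield a subtube $B(x_0, \epsilon) + i\Rb^d \subset \Omega$, and imaginary translates of a fixed point inside this subtube give a quasi-isometric copy of $(\Rb^d, |\cdot|)$ in $\Omega$. Arbitrarily large Euclidean flats would then contradict Gromov hyperbolicity, so $C$ must be unbounded.

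For $(2) \Rightarrow (1)$ I would verify the slim-triangle condition directly from the comparison. Given three points in $\Omega$, project to $C$ and use the $H_C$-Gromov-hyperbolicity to obtain a thin Hilbert-geodesic triangle, then lift it back to $\Omega$ using the upper-bound disks together with imaginary translation invariance; one then reads off thinness in $\Omega$ from the comparison. The key geometric input is that unboundedness of $C$ ensures enough "room at infinity" in the real directions that any geodesic ray in $\Omega$ spends most of its length moving in $C$, so the logarithmic warping $\log(1 + |y|/r(x))$ cannot accommodate a Euclidean flat along an infinite ray.

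The main obstacle, I expect, is the metric comparison: the lower bound requires ruling out efficient holomorphic disks that trade real against imaginary directions, while the upper bound requires sharp quantitative estimates on explicit disks adapted to both the Hilbert and Euclidean structures. A subsidiary difficulty is the thinness verification in the unbounded case, which likely needs a case analysis depending on whether the three points cluster near one end of $C$ or spread across different ends, together with uniform control on how $r(x)$ behaves along $H_C$-geodesic rays going to infinity.
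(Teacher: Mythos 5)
There is a genuine gap: the metric comparison on which your whole argument rests is false, and it fails precisely in the case that drives the theorem. Take $d=1$ and $C=(-1,1)$, so $\Omega$ is a vertical strip. With $x_1=x_2=0$ your formula predicts $K_\Omega(iy_1,iy_2)\asymp\log\left(1+\abs{y_1-y_2}\right)$, but the strip is biholomorphic to a half-plane via an exponential map and one computes $K_\Omega(iy_1,iy_2)=\tfrac{\pi}{4}\abs{y_1-y_2}$: along an imaginary fiber over a bounded base the Kobayashi distance grows \emph{linearly}, not logarithmically. (The shape $\log(1+\abs{y}/r)$ is correct for cone-like unbounded bases such as $C=(0,\infty)$, where $\Omega$ is a half-plane; that the answer switches between linear and logarithmic according to whether $C$ is bounded in the relevant direction is exactly the dichotomy the theorem encodes, so no single formula of the type you wrote can be right.) Your proposal is also internally inconsistent on this point: in the $(1)\Rightarrow(2)$ step you correctly assert that $y\mapsto x_0+iy$ gives a quasi-isometric copy of $(\Rb^d, d_{\Euc})$ in $\Omega$ — i.e.\ linear growth — which contradicts the logarithmic comparison you set up two paragraphs earlier. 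Since your $(2)\Rightarrow(1)$ direction is to be ``read off from the comparison,'' that entire direction is unsupported as written; the claim that the real-part projection is $1$-Lipschitz from $K_\Omega$ to $H_C$ also needs an argument (the projection is not holomorphic), though it is salvageable by composing with complex-linear functionals extending real supporting functionals of $C$.

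For calibration: the paper does not prove this theorem at all — it is imported from \cite[Corollary 1.13]{Z2019b} — and the only implication actually used is that a bounded base forces non-hyperbolicity. The paper's remark establishes exactly that by the (correct) flat argument you mention in passing: for $c_0\in C$ the map $y\in(\Rb^d,d_{\Euc})\mapsto c_0+iy\in(\Omega,K_\Omega)$ is a quasi-isometric embedding, so for $d\geq 2$ the space $(\Omega,K_\Omega)$ contains quasi-flats and cannot be Gromov hyperbolic. If you only need what the paper needs, that one-paragraph argument suffices; proving the full equivalence, in particular $(2)\Rightarrow(1)$, is genuinely harder and requires estimates that distinguish the bounded and unbounded directions of $C$, not a single warped-product formula.
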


Using the above results we can prove Theorem~\ref{thm:main}.

\begin{proof}[Proof of Theorem~\ref{thm:main} assuming Propositions~\ref{prop:bd_convex_char} and~\ref{prop:TD_char}] Suppose $d \geq 2$, $C \subset \Rb^d$ is a bounded convex domain, $(C,H_C)$ is Gromov hyperbolic, and $\Omega:=C+i\Rb^d \subset \Cb^d$. Since $C$ is bounded, Theorem~\ref{thm:TD_gromov} implies that $(\Omega, K_\Omega)$ is not Gromov hyperbolic. Since $(C,H_C)$ is Gromov hyperbolic, Proposition~\ref{prop:TD_char} implies that $\Db \times \Db$ does not asymptotically isometrically embed into $\Omega$. So $\Omega$ cannot be biholomorphic to a bounded convex domain by Proposition~\ref{prop:bd_convex_char}.

\end{proof}

\begin{remark} The proof of Theorem~\ref{thm:main} only requires one part of Theorem~\ref{thm:TD_gromov}: if $C \subset \Rb^d$ is a bounded convex domain and $\Omega:=C+i\Rb^d$, then $(\Omega, K_\Omega)$ is not Gromov hyperbolic. The proof of this is fairly easy, one uses standard estimates on the Kobayashi metric to show that for any $c_0 \in C$ the map 
\begin{align*}
y \in (\Rb^d, d_{\Euc}) \rightarrow c_0 + iy \in (\Omega, K_\Omega)
\end{align*}
is a quasi-isometric embedding (with constants depending on $c_0$). Hence $(\Omega, K_\Omega)$ cannot be Gromov hyperbolic. See~\cite[Lemma 19.4]{Z2019b} for details. 

\end{remark}

There are some examples of tube domains with bounded bases that are biholomorphic to bounded convex domains, for instance:

\begin{example}\label{ex:bad} Let $I =[-1,1] \subset \Rb$ and $C=I^d\subset \Rb^d$. Then 
\begin{align*}
\Omega = C+i\Rb^d =(I+i\Rb)^d 
\end{align*}
is biholomorphic to $\Db^d$. 
\end{example}

This leads to the following conjecture, related to a question of Forn{\ae}ss and K.T. Kim~\cite[Problem 14]{FK2015}.

\begin{conjecture} A tube domain with bounded convex base is biholomorphic to a bounded convex domain if and only if it is biholomorphic to $\Db^d$. \end{conjecture}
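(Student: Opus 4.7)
One direction is immediate: if $C+i\Rb^d$ is biholomorphic to $\Db^d$, then it is biholomorphic to a bounded convex domain. So the substance of the conjecture lies in the converse. Suppose $C \subset \Rb^d$ is bounded convex and $\Omega:=C+i\Rb^d$ is biholomorphic to some bounded convex $\Omega' \subset \Cb^d$. By Theorem~\ref{thm:main}, $(C,H_C)$ cannot be Gromov hyperbolic, so by Benoist's classification $\partial C$ must contain a distinguished family of nontrivial line segments. The goal is to upgrade this to the conclusion that $C$ is affinely equivalent to the cube $I^d$, whence $\Omega$ is biholomorphic to $\Db^d$ by Example~\ref{ex:bad}.

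The natural plan is to refine the invariant used in Propositions~\ref{prop:bd_convex_char} and~\ref{prop:TD_char} by asking for asymptotic isometric embeddings of higher polydiscs $\Db^k$, and defining a \emph{polydisc rank} $r(\Omega)$ as the maximum such $k$; this is visibly a biholomorphic invariant. On the tube side, I would expect $r(C+i\Rb^d)$ to coincide with a Hilbert-geometric ``rank of flatness'' of $C$, equal to $d$ precisely when $C$ is affinely a product of intervals (the case $C=I^d$ realizing it via the $d$ coordinate strips $I+i\Rb$). On the bounded-convex side, $r(\Omega')$ should be tightly controlled by the complex-geometric structure of $\partial\Omega'$. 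Biholomorphic invariance of $r$, combined with iteratively applying Theorem~\ref{thm:main} to faces or cross-sections of $C$, would then force $r=d$, and a subsequent rigidity step would force $C\cong I^d$ affinely.

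The hard part, and the likely reason this is left as a conjecture, is this final rigidity step: recovering the affine shape of $C$ from the presence of $d$ independent asymptotic $\Db$-factors. The situation is delicate because $\Omega$ carries the large abelian translation group $\{ z\mapsto z+iv : v\in\Rb^d \}$ in its automorphism group, which under the hypothesized biholomorphism transfers to a $d$-parameter abelian subgroup of $\Aut(\Omega')$ acting parabolically on $\partial\Omega'$. One would presumably combine this with Frankel-type rescaling and boundary-extension results for biholomorphisms of convex domains to try to conclude $\Omega'\cong\Db^d$. Carrying out this step uniformly, without additional smoothness or strict convexity assumptions on $\Omega'$, appears to require new ideas beyond those developed in the paper, which is why I would not expect a short proof.
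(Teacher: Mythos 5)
This statement is a \emph{conjecture} in the paper: the author offers no proof, so there is no argument of the paper's to compare yours against. What you have written is a research program rather than a proof, and you acknowledge as much in your final paragraph. The one fully correct part is the opening reduction: the ``if'' direction is trivial, and the content is entirely in the ``only if'' direction, where Theorem~\ref{thm:main} does indeed force $(C,H_C)$ to be non-Gromov-hyperbolic. Even there, though, your next sentence overstates what this gives you: by Theorem~\ref{thm:benoist_thm}, non-hyperbolicity only produces a non-strictly-convex domain somewhere in $\overline{\Aff(\Rb^d)\cdot C}^{\Xb_d}$, not a family of line segments in $\partial C$ itself --- the degeneration may be visible only in an affine rescaling limit of $C$.

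The two essential gaps are these. First, the paper's invariant only detects whether $\Db\times\Db$ asymptotically isometrically embeds, i.e.\ whether your proposed ``polydisc rank'' is at least $2$; you give no argument that it equals $d$, and the suggestion to ``iteratively apply Theorem~\ref{thm:main} to faces or cross-sections of $C$'' does not obviously parse: a face of $C$ is lower-dimensional, the tube over it is not a holomorphic retract of $\Omega$, and a complex affine slice of the bounded model $\Omega'$ is not Kobayashi-isometrically related to $\Omega'$, so neither Proposition~\ref{prop:bd_convex_char} nor Proposition~\ref{prop:TD_char} applies to such pieces. Second, the rigidity step --- upgrading $d$ asymptotic $\Db$-factors to $C\cong I^d$ affinely, or to $\Omega'\cong\Db^d$ --- is entirely missing, as you note; observe also that the conjecture only asserts $\Omega\cong\Db^d$, and it is not clear this even forces $C$ to be an affine cube, so your intermediate target may be strictly stronger than what is conjectured. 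The statement should remain labeled a conjecture; your sketch identifies plausible lines of attack but does not close either gap.
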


For tube domains over unbounded bases, the situation seems more mysterious. Pflug and Zwonek~\cite[Example 18]{PZ2018} proved that the tube domain $\Omega = C+i\Rb^2$ over 
\begin{align*}
C= \{ (x_1,x_2) \in \Rb^2 : x_1, x_2 > 0 \text{ and } x_1 x_2 > 1\}
\end{align*}
is biholomorphic to a bounded convex domain. This domain is non-homogeneous which suggests that the problem of characterizing general tube domains biholomorphic to bounded domains is very difficult. However, from Equation (34) in~\cite{PZ2018} and Theorem 3.1 in~\cite{Z2016} the Kobayashi distance on $\Omega$ is not Gromov hyperbolic. Hence it is possible that the following conjecture is true. 

\begin{conjecture} Suppose $\Omega = C+i\Rb^d$ is a tube domain and $(\Omega, K_\Omega)$ is Gromov hyperbolic (i.e. $C$ is unbounded and $(C,H_C)$ is Gromov hyperbolic). Then $\Omega$ is biholomorphic to a bounded convex domain if and only if $\Omega$ is biholomorphic to the unit ball. \end{conjecture}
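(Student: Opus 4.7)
A possible approach to this conjecture: the ``if'' direction is immediate, so the work is to show that if a Gromov hyperbolic tube $\Omega = C+i\Rb^d$ is biholomorphic to a bounded convex $D \subset \Cb^d$, then $D$ is biholomorphic to $\Bb^d$. The main tool I would use is the $\Rb^d$-action on $\Omega$ by imaginary translations $T_y(z)=z+iy$. Transporting along a biholomorphism $F \colon \Omega \to D$ yields an abelian subgroup $A \cong \Rb^d$ of $\Aut(D)$. Standard estimates on the Kobayashi metric of tube domains show that each orbit $\{c_0+iy : y \in \Rb^d\}$ diverges in $K_\Omega$ as $\abs{y} \to \infty$, so $A$ acts on $(D,K_D)$ with unbounded orbits.

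Next I would locate these orbits on $\partial D$. Since $(D,K_D)$ is Gromov hyperbolic and $A$ is amenable, general theory of abelian isometry groups on proper Gromov hyperbolic spaces forces $A$ to fix a point $\xi_\infty$ in the Gromov boundary. For bounded convex domains whose Kobayashi metric is Gromov hyperbolic, the Gromov boundary maps naturally into the Euclidean boundary (via the visibility-type results underlying~\cite{Z2016}), so $\xi_\infty$ corresponds to a point $p \in \partial D$ at which every $A$-orbit accumulates.

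The final step would be a scaling argument at $p$. By the boundary regularity of bounded convex domains with Gromov hyperbolic Kobayashi metric --- in particular the equivalence with D'Angelo finite type recorded in the corollary to Proposition~\ref{prop:bd_convex_char} --- one expects $p$ to be strongly pseudoconvex, at least after reducing to a smooth orbit stratum. A Pinchuk--Frankel rescaling along a divergent sequence $\phi_n \in A$ at $p$ would then produce $\Bb^d$ as a normal limit of rescalings of $D$, and Wong--Rosay yields that $D$ is biholomorphic to $\Bb^d$. An alternative route is to invoke Benoist's classification of unbounded $\Rb$-properly convex Gromov hyperbolic domains to reduce $C$ to an explicit cone-like model, thereby expressing $\Omega$ in an explicit Siegel-domain form that can be compared directly with the ball.

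The main obstacle is controlling the scaling limit at $p$. Gromov hyperbolicity does not automatically guarantee strong pseudoconvexity at a single orbit sink, and because the $\Rb^d$-action is ``parabolic-like'' (modeled on unipotent translations), rescalings of $D$ naturally tend toward weighted-homogeneous models with nontrivial polynomial defining functions rather than the isotropic ball. Forcing the limit to be spherical will likely require a delicate interaction between the $\Rb^d$-rigidity at $p$ and the asymptotic projective geometry of $(C,H_C)$ at infinity via Benoist, and it is precisely this interaction between the real projective base and the complex tube for which a genuinely new idea seems to be required.
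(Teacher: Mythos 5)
The statement you are addressing is stated in the paper as a \emph{conjecture}: the paper offers no proof of it, so there is nothing to compare your argument against. What you have written is a program rather than a proof, and --- as you yourself say in the final paragraph --- the decisive step is missing. Several of the intermediate steps are sound in outline: imaginary translations do give an $\Rb^d$-subgroup of $\Aut(\Omega)$ acting by Kobayashi isometries; for a bounded convex domain with Gromov hyperbolic Kobayashi metric the Gromov boundary does identify with the Euclidean boundary, so an elementary (in particular abelian) unbounded isometry group does accumulate at a boundary point of $D$. But the argument genuinely breaks at the rescaling stage, for two reasons. First, Gromov hyperbolicity of $K_D$ for a bounded convex $D$ carries no boundary regularity: the finite-type equivalence you invoke is only available when $\partial D$ is $\Cc^\infty$, which nothing in the hypotheses provides, so you cannot conclude that the orbit accumulation point $p$ is strongly pseudoconvex and Wong--Rosay does not apply. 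Second, even granting a Frankel/Pinchuk rescaling at $p$, the limit is only some convex domain with noncompact automorphism group containing an $\Rb^d$-action; for $d\geq 2$ such ``quasi-homogeneous'' convex limits include weighted-homogeneous polynomial models that are not the ball, and ruling these out is exactly the content of the conjecture. Your proposed detour through Benoist's classification does not close this gap either, since Benoist classifies the real projective base $C$ up to projective equivalence, not the tube $C+i\Rb^d$ up to biholomorphism, and projectively equivalent bases can give non-biholomorphic tubes.

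In short: the ``if'' direction is trivial, the ``only if'' direction remains open, and your sketch correctly identifies where the difficulty lies without resolving it. If you want to make partial progress along these lines, the most tractable missing lemma would be: \emph{a bounded convex domain $D$ with $(D,K_D)$ Gromov hyperbolic and $\Aut(D)$ containing a copy of $\Rb^d$ with unbounded orbits is biholomorphic to $\Bb^d$}. Isolating and attacking that statement (perhaps first for $d=2$, where the tube over the parabola is the natural test case) would be a genuine contribution; as it stands, the proposal should not be presented as a proof.
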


 \subsection*{Acknowledgements} This material is based upon work supported by the National Science Foundation under grant DMS-1904099. 

\section{Preliminaries}

\subsection{The space of convex domains} In this section we recall some basic properties of the space of convex domains, for more background see ~\cite{GZ2020}.

Let $\Kb$ be either the real or complex numbers. A convex subset $\Omega \subset \Kb^d$ is called \emph{$\Kb$-properly convex} if every $\Kb$-affine map $f: \Kb \rightarrow \Omega$ is constant. 

Let $\Xb_{d}(\Kb)$ be the set of all $\Kb$-properly convex domains in $\Kb^d$ endowed with the local Hausdorff topology. Then let 
\begin{align*}
\Xb_{d,0}(\Kb) =\left\{ (\Omega, z) \in  \Xb_{d}(\Kb) \times \Kb^d : z \in \Omega\right\} \subset \Xb_d(\Kb) \times \Kb^d.
\end{align*}
When the context is clear we will write $\Xb_d$, $\Xb_{d,0}$ instead of $\Xb_d(\Kb)$, $\Xb_{d,0}(\Kb)$.

\begin{definition} Given a subset $A\subset \Xb_d$ we will let $\overline{A}^{\Xb_d}$ denote its closure in $\Xb_d$. \end{definition}

The group $\Aff(\Kb^d)$ of affine automorphisms of $\Kb^d$ acts continuously on both $\Xb_d$ and $\Xb_{d,0}$. Building upon earlier work of Benz\'{e}cri~\cite{B1959} in the $\Kb=\Rb$ case, Frankel proved that the action of $\Aff(\Kb^d)$ on $\Xb_{d,0}$ is compact.

\begin{theorem}[Frankel~\cite{F1991}]\label{thm:frankel} There exists a compact subset $K \subset \Xb_{d,0}$ such that 
\begin{align*}
\Aff(\Kb^d)\cdot K= \Xb_{d,0}.
\end{align*}
\end{theorem}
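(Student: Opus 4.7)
The plan is to construct a compact $K \subset \Xb_{d,0}$ intersecting every $\Aff(\Kb^d)$-orbit by producing a canonical affine normal form for each pointed domain. Given $(\Omega, z) \in \Xb_{d,0}$, I would first translate so that $z$ becomes the origin. To normalize the shape of $\Omega$ around $0$ using $\GL(d,\Kb)$, the natural equivariant invariant to use is the unit ball $I_{\Omega,z}$ of the infinitesimal Kobayashi (or equivalently, in the real case, Hilbert) norm at $z$, viewed inside $T_z\Omega \cong \Kb^d$. Because $\Omega$ is $\Kb$-properly convex, $I_{\Omega,z}$ is a bounded, $\Kb$-balanced convex body, and it transforms equivariantly under $\Aff(\Kb^d)$. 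For $\Kb = \Rb$, John's theorem gives $A \in \GL(d,\Rb)$ with $B(0,1) \subset A \cdot I_{\Omega,z} \subset B(0,\sqrt{d})$; for $\Kb = \Cb$ one uses a $U(d)$-symmetric analogue, namely the maximum-volume $\Cb$-ellipsoid inscribed in the $\Cb$-balanced body $I_{\Omega,z}$, to obtain a similar two-sided sandwich.

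After this normalization, let $K$ be the set of pointed domains $(\Omega, 0) \in \Xb_{d,0}$ for which $B(0,1) \subset I_{\Omega,0} \subset B(0,C_d)$. I expect $K$ to be compact in the local Hausdorff topology: Blaschke's selection theorem produces subsequential local Hausdorff limits of convex sets, and the sandwich on the indicatrix passes to limits by semicontinuity of the infinitesimal Kobayashi (Hilbert) metric. The lower bound $B(0,1) \subset I_{\Omega,0}$ keeps $0$ in the interior of the limit, while the upper bound $I_{\Omega,0} \subset B(0,C_d)$ prevents the limit from containing a $\Kb$-affine line through $0$, since such a line would force the infinitesimal norm to vanish in some direction.

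The main obstacle I foresee is the complex case. John's classical theorem normalizes with respect to the full real-linear group $\GL(d,\Rb)$, whereas here one is restricted to $\Cb$-linear transformations. One therefore needs a John-type extremal ellipsoid matched to the smaller group $\GL(d,\Cb)$ acting on $\Cb$-balanced convex bodies, with effective quantitative comparison to the Euclidean ball. Establishing existence, uniqueness, and sharp bounds for this ellipsoid --- or finding a substitute normalization that behaves well under $\GL(d,\Cb)$ --- is the technical heart of Frankel's refinement over Benz\'{e}cri's original real argument.
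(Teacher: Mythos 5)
The paper does not prove this statement at all --- it is quoted from Frankel~\cite{F1991} (with the real case due to Benz\'ecri), so there is no in-paper argument to compare against. Your sketch is the standard normalization proof of the Benz\'ecri--Frankel compactness theorem, and in outline it is correct: translate $z$ to $0$, normalize a canonical balanced convex body attached to $(\Omega,0)$ by a $\Kb$-linear map using a John-type theorem, and check that the resulting sandwiched family is closed in $\Xb_{d,0}$. Two remarks on the details. First, for the closedness you should be careful about which semicontinuity does what: under local Hausdorff convergence $\Omega_n\to\Omega_\infty$ the compact-inclusion property gives $\limsup_n k_{\Omega_n}(0;v)\le k_{\Omega_\infty}(0;v)$, so it is the \emph{upper} bound $I_{\Omega,0}\subset B(0,C_d)$ that passes directly to the limit and rules out an affine line through $0$ (a line anywhere in an open convex limit containing $B(0,1/2)$ forces a parallel line through $0$, hence arbitrarily long segments through $0$ in $\Omega_n$, hence $k_{\Omega_n}(0;u)\to 0$). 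The lower bound does not pass to the limit by the same inequality; what you actually need from it is only the uniform ball $B(0,1/2)\subset\Omega_n$ coming from the elementary estimate $k_\Omega(0;v)\ge \norm{v}/(2\,\delta_\Omega(0))$, which keeps $0$ interior in the limit. (You may also prefer to replace the indicatrix by the balanced core $\bigcap_{\abs{\lambda}\le 1}\lambda^{-1}(\Omega-z)$, which is manifestly convex, bounded by $\Kb$-proper convexity, and bi-Lipschitz to the indicatrix for convex domains.)

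Second, the obstacle you single out --- a John theorem adapted to $\GL(d,\Cb)$ --- is genuine but closes cleanly, and it is worth recording the mechanism: the body $I_{\Omega,0}$ is circled (invariant under $v\mapsto e^{i\theta}v$), so by \emph{uniqueness} of the maximal-volume ellipsoid in $\Rb^{2d}$ its John ellipsoid $E$ satisfies $E=e^{i\theta}E$ for all $\theta$; averaging the defining quadratic form over the circle action shows that a circle-invariant positive definite form on $\Cb^d$ is Hermitian, so $E=\{v: v^*Hv<1\}$ with $H=P^*P$, and $P\in\GL(d,\Cb)$ carries $E$ to the round ball with the sandwich constant $\sqrt{2d}$. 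With that supplied, your construction of $K$ works and does reproduce the cited theorem.
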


We will also use the following observation.

\begin{observation}\label{obs:compact_inclusion} If $\Omega_n$ converges to $\Omega$ in $\Xb_d$ and $K \subset \Omega$ is a compact subset, then
\begin{align*}
K \subset\Omega_n
\end{align*}
for $n$ sufficiently large. 
\end{observation}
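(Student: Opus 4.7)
The plan is to combine the definition of the local Hausdorff topology with the elementary observation that the compact set $K$ sits at positive Euclidean distance from $\Kb^d \setminus \Omega$. Concretely, I would set
$$\epsilon := \mathrm{dist}(K,\Kb^d \setminus \Omega) > 0,$$
which is strictly positive since $K$ is compact, $\Omega$ is open, and $\Omega \neq \Kb^d$ (as $\Omega$ is $\Kb$-properly convex). I would then take $L$ to be the closed $\epsilon$-neighborhood of $K$, which is compact.

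Next I would invoke the relevant characterization of the local Hausdorff topology on $\Xb_d$: the convergence $\Omega_n \to \Omega$ is equivalent to local Hausdorff convergence of the closed complements $\Kb^d \setminus \Omega_n \to \Kb^d \setminus \Omega$. Restricted to the compact set $L$, this yields that for all sufficiently large $n$, every point of $(\Kb^d \setminus \Omega_n)\cap L$ lies within Euclidean distance $\epsilon/2$ of some point of $\Kb^d \setminus \Omega$.

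Finally I would conclude by contradiction: if some $p \in K$ satisfies $p \notin \Omega_n$, then $p \in (\Kb^d \setminus \Omega_n)\cap L$ (because $K \subset L$), so by the previous step there exists $q \in \Kb^d \setminus \Omega$ with $\abs{p-q} < \epsilon/2$; but $p \in K$ forces $\abs{p-q} \geq \epsilon$, a contradiction. Hence $K \subset \Omega_n$ for all $n$ sufficiently large.

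I do not expect any genuine obstacle here; the only substantive point is quoting the correct formulation of local Hausdorff convergence for open convex sets in terms of their complements (or equivalently their closures), after which the argument is a routine $\epsilon/2$ comparison.
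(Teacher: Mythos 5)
The paper does not actually prove this observation; it is a one-line citation to \cite[Proposition 3.3]{GZ2020}. So you are supplying an argument where the paper supplies none, and your overall structure (positive distance from $K$ to the complement, then an $\epsilon/2$ comparison) is the right shape. The final contradiction step is correct as written.

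The one substantive issue is the step you yourself identify as "the only substantive point": the claim that $\Omega_n \to \Omega$ in $\Xb_d$ is \emph{equivalent} to local Hausdorff convergence of the closed complements $\Kb^d \setminus \Omega_n \to \Kb^d \setminus \Omega$. In this literature (following Benz\'ecri and Frankel, and in \cite{GZ2020}) the local Hausdorff topology is defined via the closures: $\overline{\Omega_n} \cap B_R \to \overline{\Omega} \cap B_R$ in Hausdorff distance for each $R$. Under that definition, the convergence hypothesis only tells you that each $p \in K$ is \emph{close to} $\overline{\Omega_n}$, not that $p \in \Omega_n$, and passing from the former to the latter is exactly the content of the observation. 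Your complement formulation is indeed equivalent for convex domains, but proving that equivalence requires a genuine convexity argument: e.g., if $p \in \Omega$ with $B(p,\epsilon) \subset \Omega$ but $p \notin \Omega_n$, take a supporting hyperplane $H$ of $\Omega_n$ through $p$; the half of $B(p,\epsilon)$ lying at distance $\geq \delta$ on the far side of $H$ is then at distance $\geq \delta$ from $\overline{\Omega_n}$, contradicting $\overline{\Omega} \cap B_R \subset N_{\delta}(\overline{\Omega_n})$ for large $n$. Without some such argument (or an explicit citation for the complement characterization), your proof has an un-discharged dependency that carries essentially all of the difficulty. With the supporting-hyperplane step inserted, the proof is complete, and you can in fact run it directly on $K$ without introducing the complement formulation at all.
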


\begin{proof} See for instance~\cite[Proposition 3.3]{GZ2020}. \end{proof}

\subsection{The Kobayashi metric} In this section we recall some basic properties of the Kobayashi distance on a $\Cb$-properly convex domain, for more background see~\cite{A1989}. 

Given a domain $\Omega\subset \Cb^d$ let $K_\Omega$ denote the Kobayashi (pseudo-)distance on $\Omega$. For general unbounded domains determining whether or not $K_\Omega$ is non-degenerate is very difficult, but in the special case of convex domains we have the following result of Barth. 

\begin{theorem}[Barth~\cite{B1980}]\label{thm:barth}
Suppose $\Omega$ is a convex domain. Then the following are equivalent:
\begin{enumerate}
\item $\Omega$ is $\Cb$-properly convex, 
\item $\Omega$ is biholomorphic to a bounded domain, 
\item $K_\Omega$ is a non-degenerate distance on $\Omega$, 
\item $(\Omega, K_\Omega)$ is a proper geodesic metric space. 
\end{enumerate}
\end{theorem}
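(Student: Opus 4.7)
My plan is to establish the equivalence via the cycle $(4) \Rightarrow (3) \Rightarrow (1) \Rightarrow (4)$, together with the separate chain $(1) \Rightarrow (2) \Rightarrow (3)$. Of these, three implications are routine and one---the construction of a bounded biholomorphic realization in $(1) \Rightarrow (2)$---is the main content of the theorem.

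For the routine steps: $(4) \Rightarrow (3)$ is immediate. For $(3) \Rightarrow (1)$ I would argue contrapositively: if $\Omega$ is not $\Cb$-properly convex, there is a non-constant $\Cb$-affine map $f: \Cb \to \Omega$, and since holomorphic maps are distance-decreasing for the Kobayashi pseudodistance and $K_\Cb \equiv 0$, we obtain $K_\Omega(f(z),f(w)) = 0$ for all $z,w \in \Cb$, contradicting non-degeneracy. For $(2) \Rightarrow (3)$, if $\Omega$ is biholomorphic to a bounded domain $\Omega' \subset r\Bb$, the inclusion yields $K_{\Omega'} \geq K_{r\Bb}$, which is non-degenerate, and biholomorphism invariance transfers this to $\Omega$. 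For $(1) \Rightarrow (4)$, on a $\Cb$-properly convex convex domain the Kobayashi-Royden infinitesimal metric is continuous, so $K_\Omega$ is a length metric; standard boundary distance estimates (bounding $K_\Omega$ below in terms of $-\log$ of the Euclidean boundary distance) yield metric completeness; and the Hopf-Rinow theorem for length spaces upgrades this to a proper geodesic metric.

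The main obstacle is $(1) \Rightarrow (2)$: biholomorphically realizing an abstract $\Cb$-properly convex domain as a bounded subset of $\Cb^d$. The strategy is to use that the hypothesis of no complex affine line in $\Omega$ allows one to select $d$ complex affine functionals $\ell_1, \ldots, \ell_d$ whose zero hyperplanes avoid $\Omega$, and then to compose complex affine coordinate changes with complex inversions $w \mapsto 1/(w-a)$ to send the unbounded directions of $\Omega$ to infinity while producing a bounded image. The base case $d=1$ is the Riemann mapping theorem applied to a proper convex open subset of $\Cb$; the higher-dimensional construction proceeds by iteratively normalizing coordinate directions, using that each complex slice of $\Omega$ remains a proper convex open subset of $\Cb$. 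This is the classical result of Barth~\cite{B1980}, which I would invoke directly rather than reprove.
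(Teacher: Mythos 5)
The paper does not prove this statement at all: it is quoted verbatim as Barth's theorem and used as a black box, so your proposal is being compared against a pure citation. Since you also ultimately defer the one substantive implication, $(1) \Rightarrow (2)$, to Barth's paper, your proposal is in substance the same as what the paper does, and the routine implications you supply ($(4)\Rightarrow(3)$, $(3)\Rightarrow(1)$ via the distance-decreasing property applied to a non-constant affine map $\Cb \to \Omega$, and $(2)\Rightarrow(3)$ by comparison with a ball) are all correct.

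One caveat on your sketch of $(1) \Rightarrow (4)$: for the unbounded domains that are the whole point of this paper, a lower bound on $K_\Omega$ in terms of $-\log$ of the Euclidean boundary distance is not by itself enough for completeness or properness. You must also rule out $K_\Omega$-bounded sequences escaping to infinity in $\Cb^d$. The standard fix is the one the paper itself uses elsewhere: a $\Cb$-properly convex convex domain embeds affinely into $\Hc^d$ (product of half-planes), and if $\norm{z_n} \to \infty$ then some coordinate of the image leaves every compact subset of $\Hc$, forcing $K_{\Hc^d}(z_0, z_n) \to \infty$ and hence $K_\Omega(z_0,z_n) \to \infty$. With that addition (and Lempert-type or exhaustion arguments for geodesicity), your chain closes; without it, the Hopf--Rinow step has nothing to apply to.
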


For convex domains, there is a connection between the Kobayashi distance and flat pieces in the boundary. 

\begin{proposition}\cite[Proposition 3.5]{Z2017}\label{prop:finite_dist}
Suppose $\Omega$ is a $\Cb$-properly convex domain and $x, y \in \partial \Omega$ are distinct. Assume $z_m, w_n \in \Omega$ are sequences such that $z_m \rightarrow x$ and $w_n \rightarrow y$. If 
\begin{align*}
\liminf_{m,n \rightarrow \infty} K_{\Omega}(z_m, w_n) < \infty
\end{align*}
and $L$ is the complex line containing $x$ and $y$, then $L \cap \Omega = \emptyset$ and the interior of $\partial\Omega \cap L$ in $L$ contains $x$ and $y$.
\end{proposition}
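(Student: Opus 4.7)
The plan is to use the maximum principle applied to holomorphic disks in $\overline\Omega$ obtained from Lempert's complex geodesics. By Theorem~\ref{thm:barth}, $(\Omega,K_\Omega)$ is a proper geodesic metric space; Lempert's theorem then supplies, for each $m,n$ in a subsequence realizing the $\liminf$, a complex geodesic $\phi_{m,n}:\Db\to\Omega$ with $\phi_{m,n}(0)=z_m$, $\phi_{m,n}(\zeta_{m,n})=w_n$, and $K_\Db(0,\zeta_{m,n})=K_\Omega(z_m,w_n)$. The hypothesis keeps $\zeta_{m,n}$ in a compact subset of $\Db$, and after a further subsequence we may assume $\zeta_{m,n}\to\zeta_\infty\in\Db\setminus\{0\}$.

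The core estimate is the following. Given any $\Cb$-linear functional $\Lambda:\Cb^d\to\Cb$ such that $H:=\{z:\Real\Lambda(z-x)=0\}$ is a supporting real hyperplane of $\overline\Omega$ at $x$, consider the holomorphic function $f_{m,n}:=\Lambda\circ(\phi_{m,n}-x):\Db\to\Cb$. Its real part $u_{m,n}=\Real f_{m,n}$ is harmonic, non-positive, and satisfies $u_{m,n}(0)=\Real\Lambda(z_m-x)\to 0$ as $z_m\to x$. Harnack's inequality applied to $-u_{m,n}\geq 0$ gives $u_{m,n}\to 0$ locally uniformly on $\Db$; combined with $f_{m,n}(0)=\Lambda(z_m-x)\to 0$, a Cauchy--Riemann / harmonic-conjugate argument upgrades this to $f_{m,n}\to 0$ locally uniformly. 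Evaluating at $\zeta=\zeta_{m,n}\to\zeta_\infty$ then gives $\Lambda(w_n-x)=f_{m,n}(\zeta_{m,n})\to 0$, so $\Lambda(y-x)=0$ and $y$ lies in the complex affine hyperplane $\tilde H:=\{z:\Lambda(z-x)=0\}\subset H$.

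For the first conclusion, applying the core estimate to any supporting real hyperplane at $x$ gives $L\subset\tilde H$ (since $x,y\in\tilde H$ and $\tilde H$ is complex affine), and $\tilde H\cap\Omega\subset H\cap\Omega=\emptyset$ forces $L\cap\Omega=\emptyset$. For the second conclusion, argue by contradiction: if $x$ were not in the $L$-interior of $L\cap\overline\Omega$, the closed convex set $L\cap\overline\Omega\subset L\cong\Cb$ would admit a supporting affine real line $\ell_L\subset L$ at $x$. A Hahn--Banach extension of the corresponding supporting functional on $L$ to one on $\Cb^d$ would produce a supporting real hyperplane $H$ of $\overline\Omega$ at $x$ with $H\cap L=\ell_L$, hence $L\not\subset H$. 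Running the core estimate for this $H$ would give $L\subset\tilde H\subset H$, contradicting $L\not\subset H$. A symmetric argument at $y$ would complete the proof.

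The main technical point is the Hahn--Banach extension in the second conclusion: producing a supporting hyperplane $H$ with $H\cap L\subsetneq L$, equivalently a supporting functional at $x$ that does not vanish on the direction of $L$. This is where the hypothesis that $x$ is not in the $L$-interior of $L\cap\overline\Omega$ is essential: if every supporting functional of $\overline\Omega$ at $x$ vanished on $L$, then by convex duality the direction of $L$ would lie in the lineality of the tangent cone to $\overline\Omega$ at $x$, forcing a small $L$-neighborhood of $x$ into $L\cap\overline\Omega$ and contradicting the assumption.
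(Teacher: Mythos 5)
Your core estimate and the first conclusion are fine, though the route is longer than necessary: once you have a supporting real hyperplane $H=\{\Real\Lambda(z-x)=0\}$ at $x$, composing with the complex-linear map $\Lambda$ sends $\Omega$ into a half-plane, and the distance-decreasing property gives $K_\Omega(z_m,w_n)\geq K_{\Hc}(\Lambda(z_m),\Lambda(w_n))\to\infty$ unless $\Lambda(y-x)=0$; this yields $y\in\wt{H}:=x+\ker_{\Cb}\Lambda$ (hence $L\subset\wt H\subset H$ and $L\cap\Omega=\emptyset$) with no need for Lempert disks, Harnack, or Borel--Carath\'eodory. (If you do keep the Lempert route, note that existence of complex geodesics on an unbounded $\Cb$-properly convex domain itself requires an exhaustion/tautness argument, and the claim $\zeta_\infty\neq 0$ is neither justified nor needed.)

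The proof of the second conclusion has a genuine gap: the Hahn--Banach step is false. It is not true that if $x$ is not in the $L$-interior of $L\cap\overline{\Omega}$ then $\overline{\Omega}$ admits a supporting real hyperplane $H$ at $x$ with $L\not\subset H$. Take $\Omega=\{(z_1,z_2)\in\Cb^2:\Imag z_2>\abs{z_1}^2\}$, $x=0$, $L=\Cb\times\{0\}$: then $L\cap\overline{\Omega}=\{0\}$, so $x$ is certainly not an $L$-interior point, yet the unique supporting hyperplane at $0$ is $\{\Imag z_2=0\}$, which contains $L$. Your tangent-cone justification is exactly where this breaks: the direction of $L$ does lie in the lineality of the tangent cone (which here is the closed half-space $\{\Imag z_2\geq 0\}$), but membership in the tangent cone is a closure statement and does not force any punctured $L$-neighborhood of $x$ into $\overline{\Omega}$ when the contact is of second order. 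Consequently the contradiction you aim for never materializes: the core estimate only shows that \emph{every} supporting hyperplane at $x$ contains $L$, and the same example shows that this alone does not imply $x\in\interior_L(L\cap\overline{\Omega})$. The second conclusion genuinely requires the complex-analytic input you set up but then abandon: pass to a normal-families limit of the extremal disks $\phi_{m,n}$ to obtain a non-constant holomorphic map $\varphi:\Db\rightarrow\overline{\Omega}$ with $\varphi(0)=x$, $\varphi(\zeta_\infty)=y$; the maximum principle applied to $\Real\Lambda\circ\varphi$ forces $\varphi(\Db)\subset\overline{\Omega}\cap H\subset\partial\Omega$, and one then invokes the structure theory of analytic disks in convex boundaries (Fu--Straube, cited elsewhere in this paper) to manufacture a genuine two-real-dimensional affine piece of $\partial\Omega$ inside $L$ around $x$ and $y$. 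Supporting-hyperplane arguments alone cannot distinguish the desired situation from the Siegel-domain example above.
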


The Kobayashi distance is also well behaved with respect to the local Hausdorff topology. 

\begin{theorem}\label{thm:dist_convergence} If $\Omega_n$ converges to $\Omega$ in $\Xb_d$, then
\begin{align*}
K_\Omega =\lim_{n \rightarrow \infty} K_{\Omega_n}
\end{align*}
uniformly on compact subsets of $\Omega \times \Omega$. 
\end{theorem}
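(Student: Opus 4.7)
The plan is to establish pointwise convergence $K_{\Omega_n}(z,w) \to K_\Omega(z,w)$ on $\Omega \times \Omega$ and then upgrade to uniform convergence on compact subsets via an Arzel\`a--Ascoli argument once uniform Lipschitz bounds for the $K_{\Omega_n}$ are in place. Pointwise convergence splits into the upper estimate $\limsup_n K_{\Omega_n}(z,w) \le K_\Omega(z,w)$ and the matching lower estimate.

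For the upper estimate, fix $\epsilon > 0$ and, using Lempert's theorem for the convex domain $\Omega$, choose a holomorphic $\phi : \Db \to \Omega$ with $\phi(0) = z$, $\phi(a) = w$, and $d_\Db(0,a) \le K_\Omega(z,w) + \epsilon$. For $r \in (|a|, 1)$ the rescaling $\psi_r(\zeta) := \phi(r\zeta)$ has image $\phi(r\overline{\Db}) \Subset \Omega$; by Observation~\ref{obs:compact_inclusion} this image sits inside $\Omega_n$ for $n$ large, so $\psi_r$ is an admissible disk yielding $K_{\Omega_n}(z,w) \le d_\Db(0, a/r)$, and the estimate follows upon letting $r \to 1^-$ and $\epsilon \to 0$. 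For the lower estimate, apply Lempert's theorem in each $\Omega_n$ to obtain geodesic disks $\phi_n : \Db \to \Omega_n$ with $\phi_n(0) = z$, $\phi_n(a_n) = w$, $d_\Db(0, a_n) = K_{\Omega_n}(z,w)$. Assuming $\{\phi_n\}$ is normal on $\Db$, pass to a subsequential locally uniform limit $\phi : \Db \to \Cb^d$; the local Hausdorff convergence $\Omega_n \to \Omega$ then forces $\phi(\Db) \subset \overline{\Omega}$. Since $\phi(0) = z \in \Omega$, any touching of $\partial\Omega$ by $\phi(\Db)$ would, by the maximum principle applied to a real affine function supporting $\Omega$ at the touching point, force $\phi(\Db)$ into a supporting hyperplane of $\Omega$, contradicting $\phi(0) \in \Omega$; hence $\phi : \Db \to \Omega$ is admissible and $K_\Omega(z, w) \le d_\Db(0, a) = \liminf_n K_{\Omega_n}(z, w)$, with $a = \lim a_n$.

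The main obstacle is the normal family property for $\{\phi_n\}$ when the $\Omega_n$ are unbounded, since Montel's theorem does not apply directly. The intended remedy combines the Kobayashi--Schwarz inequality $\phi_n(r\Db) \subset B_{K_{\Omega_n}}(z, \arctanh r)$ with Frankel's theorem (Theorem~\ref{thm:frankel}): one chooses affine normalizations placing the pairs $(\Omega_n, z)$ in a compact subset of $\Xb_{d,0}(\Cb)$, and on this normalized compact family Kobayashi balls of bounded radius lie in a common Euclidean-bounded set, so Montel applies to the normalized disks and the limit can be transported back. Finally, to upgrade pointwise convergence to uniformity on a compact $K \subset \Omega$, I would use that $K$ is eventually at uniformly positive Euclidean distance from $\partial\Omega_n$ (by Observation~\ref{obs:compact_inclusion} applied to a slight thickening of $K$ inside $\Omega$) together with the standard Kobayashi--Royden upper bound $k_{\Omega_n}(z,v) \lesssim |v|/\mathrm{dist}(z, \partial\Omega_n)$ to obtain a uniform Euclidean Lipschitz estimate on $K_{\Omega_n}|_{K \times K}$; Arzel\`a--Ascoli then delivers uniform convergence on $K \times K$.
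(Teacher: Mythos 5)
The paper gives no proof of this statement---it simply cites \cite[Theorem 4.1]{Z2016}---and your outline is essentially the standard argument behind that reference: rescaled analytic disks plus Observation~\ref{obs:compact_inclusion} for $\limsup_n K_{\Omega_n}\le K_\Omega$, a normal-families argument with (near-)extremal disks for the reverse inequality, and an equi-Lipschitz estimate to upgrade pointwise to locally uniform convergence. The upper estimate, the maximum-principle step (harmonicity of $\ell\circ\phi$ for a real affine functional $\ell$ supporting $\Omega$ at a putative boundary value forces $\phi(\Db)$ into the supporting hyperplane), and the Arzel\`a--Ascoli upgrade are all correct as written.

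Two points in the lower estimate need repair or acknowledgment. First, the Frankel normalization is both unnecessary and, as stated, gappy: Frankel's theorem gives affine maps $A_n$ with $A_n(\Omega_n,z)$ in a fixed compact set, but these $A_n$ need not converge or even be bounded, so ``transporting the limit back'' via $A_n^{-1}$ is not justified. The fix is to drop the normalization: since $\Omega_n\to\Omega$ and $z\in\Omega$ is fixed, the pointed domains $(\Omega_n,z)$ already converge to $(\Omega,z)$ in $\Xb_{d,0}$ and hence form a compact family, to which your uniform-boundedness claim applies directly. Second, that claim---over a compact subset of $\Xb_{d,0}(\Cb)$ the sets $\{y:K_D(x_0,y)\le R\}$ lie in a common Euclidean-bounded set, uniformly away from the boundaries---is where the real work hides; it is a uniform version of Barth's properness theorem (Theorem~\ref{thm:barth}(4)), proved via projections of convex domains onto half-planes, and should be proved or explicitly cited rather than asserted. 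Two smaller remarks: the upper estimate does not need Lempert's theorem at all (rescale each disk in a near-optimal chain), and in the lower estimate you should take near-extremal disks with $K_\Db(0,a_n)\le K_{\Omega_n}(z,w)+1/n$, since existence of exact extremal disks in unbounded convex domains itself requires a normal-families argument; the identity of the Kobayashi distance with the Lempert function on arbitrary convex domains (obtained from the bounded case by exhaustion) is all you actually need. You should also note that if $\liminf_n K_{\Omega_n}(z,w)=\infty$ there is nothing to prove, so that after passing to a subsequence $|a_n|$ stays bounded away from $1$.
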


\begin{proof} See for instance~\cite[Theorem 4.1]{Z2016}. \end{proof}

\subsection{Gromov hyperbolic metric spaces}\label{sec:prelim_gromov} In this section we recall the definition of Gromov hyperbolic metric spaces, for more background see for instance~\cite{BH1999}. 

Suppose $(X,d)$ is a metric space. A curve $\sigma: [a,b] \rightarrow X$ is a \emph{geodesic} if $d(\sigma(t_1),\sigma(t_2)) = \abs{t_1-t_2}$ for all $t_1, t_2 \in [a,b]$.  A \emph{geodesic triangle} in a metric space is a choice of three points in $X$ and geodesic segments  connecting these points. A geodesic triangle is said to be \emph{$\delta$-thin} if any point on any of the sides of the triangle is within distance $\delta$ of the other two sides. 

\begin{definition}
A proper geodesic metric space $(X,d)$ is called \emph{$\delta$-hyperbolic} if every geodesic triangle is $\delta$-thin. If $(X,d)$ is $\delta$-hyperbolic for some $\delta\geq0$ then $(X,d)$ is called \emph{Gromov hyperbolic}.
\end{definition}

In this paper we will also use an equivalent formulation of Gromov hyperbolicity. Given $o,y,z \in X$ the \emph{Gromov product} is 
\begin{align*}
(x|y)_o = \frac{1}{2}(d(o,x)+d(o,y)-d(x,y)).
\end{align*}
Using the Gromov product it is possible to give an alternative definition of Gromov hyperbolicity (for a proof see for instance~\cite[Chapter III.H.1, Proposition 1.22]{BH1999}).

\begin{theorem}
A proper geodesic metric space $(X,d)$ is Gromov hyperbolic if and only if there exists $\alpha \geq 0$ such that 
\begin{align*}
(x|y)_o \geq \min \{ (x|z)_o, (z|y)_o\} - \alpha
\end{align*}
for all $o,x,y,z \in X$.
\end{theorem}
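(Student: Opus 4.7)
The plan is to prove both implications, following the standard approach in \cite{BH1999}.

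For the forward direction, assume $(X,d)$ is $\delta$-hyperbolic and fix $o, x, y, z \in X$. I would choose geodesic segments $[x,y]$, $[x,z]$, $[y,z]$. The key geometric input, which follows from the triangle inequality together with $\delta$-thinness, is that $d(o, [x,y])$ equals $(x|y)_o$ up to additive error of order $\delta$. Let $p \in [x,y]$ realize the distance from $o$ to $[x,y]$. By $\delta$-thinness of the geodesic triangle on $\{x,y,z\}$, the point $p$ lies within distance $\delta$ of either $[x,z]$ or $[y,z]$, so $d(o, [x,z] \cup [y,z]) \leq d(o,p) + \delta$. Translating back via the Gromov-product--to--distance comparison yields $\min\{(x|z)_o, (y|z)_o\} \leq (x|y)_o + O(\delta)$, which is the desired inequality with $\alpha$ a universal multiple of $\delta$.

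For the reverse direction, assume the four-point inequality holds with constant $\alpha$. The main tool I would invoke is the four-point tree approximation: any four points of $X$ satisfying this inequality admit a map into a finite metric tree that distorts all pairwise distances by at most $O(\alpha)$. This is a short case analysis on the order of the three Gromov products at a chosen base point, using the four-point inequality to show that the two largest products are close to each other. To conclude that $(X,d)$ is $\delta$-hyperbolic, take any geodesic triangle with vertices $x, y, z$ and a point $p$ on the side $[x,y]$. Applying the four-point tree approximation to $\{p, x, y, z\}$ and using that in a tree every point on a geodesic between two of the vertices lies on the geodesic from one of them to the third, one deduces that $p$ is within distance $O(\alpha)$ of $[x,z] \cup [y,z]$.

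The main obstacle is the reverse direction. Although the four-point tree approximation is elementary, extracting the geometric statement about triangle thinness from the purely algebraic four-point inequality requires carefully tracking additive constants through the near-isometric embedding, and exploiting the fact that $p$ lies on a genuine geodesic between two of the four points to force the tree-image of $p$ onto the corresponding tree-geodesic.
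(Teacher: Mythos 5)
The paper offers no proof of this statement --- it is quoted verbatim from Bridson--Haefliger with a citation --- so the only question is whether your argument stands on its own. Your forward direction is correct and standard: the two-sided comparison $\left|(x|y)_o - d(o,[x,y])\right| \leq 2\delta$ (one inequality from the triangle inequality, the other from thinness of the triangle $o,x,y$) plus thinness of the triangle $x,y,z$ gives the four-point inequality with $\alpha = 3\delta$.

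The reverse direction has a genuine gap at its final step. The four-point tree approximation controls only the six pairwise distances among $\{p,x,y,z\}$; it knows nothing about the chosen geodesic segments $[x,z]$ and $[y,z]$ in $X$. So from ``$\Phi(p)$ lies on (or near) the tree-geodesic from $\Phi(x)$ to $\Phi(z)$'' you can only conclude the near-degenerate triangle inequality $d(x,p)+d(p,z)\leq d(x,z)+O(\alpha)$. In a general geodesic space this does \emph{not} imply that $p$ is within $O(\alpha)$ of the segment $[x,z]$ (in $\Rb^2$ a point at distance roughly $\sqrt{\epsilon\, d(x,z)}$ from a segment can satisfy the inequality with error $\epsilon$), so the sentence ``one deduces that $p$ is within distance $O(\alpha)$ of $[x,z]\cup[z,y]$'' does not follow as written; the difficulty is not, as you suggest, forcing $\Phi(p)$ onto the tree-geodesic or tracking constants, but pulling the conclusion back from the four-point tree to the space. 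The repair is to use the hypothesis once more: let $q\in[x,z]$ be the point with $d(x,q)=d(x,p)$ (after reducing, via $(y|z)_x+(x|z)_y=d(x,y)$, to the case $d(x,p)\leq (y|z)_x$, which guarantees $d(x,p) \le d(x,z)$); since $p\in[x,y]$ and $q\in[x,z]$ one has $(p|y)_x=d(x,p)$ and $(q|z)_x=d(x,p)$, and two applications of the four-point inequality at the basepoint $x$ give $(p|q)_x\geq \min\{(p|y)_x,(y|z)_x,(z|q)_x\}-2\alpha = d(x,p)-2\alpha$, whence $d(p,q)=2d(x,p)-2(p|q)_x\leq 4\alpha$. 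Once you have this computation the tree lemma is superfluous, and this direct Gromov-product argument is essentially the one in the cited Proposition 1.22 of \cite{BH1999}.
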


\section{Asymptotic embeddings and analytic disks in the boundary} 

We say that a domain $\Omega \subset \Cb^d$ has \emph{simple boundary} if every holomorphic map $\varphi : \Db \rightarrow \partial \Omega$ is constant. In this section we establish the following characterization of convex domains which admit an asymptotically isometrically embedding of $\Db \times \Db$. 

\begin{theorem}\label{thm:asym_embedding} Suppose $\Omega \subset \Cb^d$ is a $\Cb$-properly convex domain. Then the following are equivalent:
\begin{enumerate}
\item there exists $D\in \overline{\Aff(\Cb^d) \cdot \Omega}^{\Xb_d}$ with non-simple boundary,
\item $\Db \times \Db$ asymptotically isometrically embeds into $\Omega$.
\end{enumerate}
\end{theorem}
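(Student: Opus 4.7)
My approach is to prove the two implications separately, using Frankel's compactness (Theorem~\ref{thm:frankel}), the local uniform convergence of Kobayashi distances (Theorem~\ref{thm:dist_convergence}), Observation~\ref{obs:compact_inclusion}, and the finite-distance criterion of Proposition~\ref{prop:finite_dist}.

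\emph{Direction $(2)\Rightarrow(1)$.} Start with $f_n:\Db\times\Db\to\Omega$ realizing the asymptotic isometric embedding. I would use Frankel's theorem to find $A_n\in\Aff(\Cb^d)$ so that the pointed domains $(A_n\Omega, A_nf_n(0,0))$ land in a compact subset of $\Xb_{d,0}$; pass to a subsequence with $A_n\Omega\to D\in\Xb_d$ and $A_nf_n(0,0)\to q\in D$. The hypothesis bounds $K_{A_n\Omega}(A_nf_n(z,w), A_nf_n(0,0))$ uniformly on compact subsets of $\Db^2$, and Theorem~\ref{thm:dist_convergence} then forces the images of $A_n\circ f_n$ on each compact to lie in a compact subset of $\Cb^d$. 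Extract a subsequential holomorphic limit $g:\Db^2\to\overline{D}$. Wherever $g$ takes values in $D$ one has $K_D(g(z,w), q)=\max(K_\Db(z,0), K_\Db(w,0))$, so the slices $z\mapsto g(z,0)$ and $w\mapsto g(0,w)$ are Kobayashi isometric disks through $q$ that escape to $\partial D$. Fixing $w_0\in\Db\setminus\{0\}$ and letting $z\to 1$ radially, the two sequences $g(z,0)$ and $g(z,w_0)$ both exit every compact of $D$ while their Kobayashi distance stays equal to $K_\Db(w_0,0)<\infty$; after extracting boundary limits $x_0, y_{w_0}\in\partial D$ one enters the regime of Proposition~\ref{prop:finite_dist}, forcing a complex-line segment of $\partial D$ through them. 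Letting $w_0$ vary and combining the holomorphic dependence of the $g_n$'s in $w_0$ with a Hurwitz-style rigidity (the Kobayashi isometry in the $w$-direction prevents $w_0\mapsto y_{w_0}$ from being constant) produces a non-constant holomorphic map $\Db\to\partial D$.

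\emph{Direction $(1)\Rightarrow(2)$.} Let $D=\lim_n A_n\Omega$ have non-simple boundary with non-constant holomorphic $\varphi:\Db\to\partial D$. By the maximum principle applied to any supporting complex linear functional $\ell$ of $D$ at $\varphi(0)$ with $\Real\,\ell>0$ on $D$, $\ell\circ\varphi\equiv 0$, so $\varphi(\Db)$ lies in the hyperplane $\{\ell=0\}$. Pick $p\in D$, rescale $\ell$ so $\ell(p)\in\Rb_{>0}$, and set $v=p-\varphi(0)$. For $\epsilon\in(0,1)$ I would consider the ansatz
\begin{equation*}
h^{(\epsilon)}(z,w) = (1-\epsilon)\varphi(z) + \epsilon\bigl(p + \eta(w)v\bigr),
\end{equation*}
where $\eta:\Db\to\Cb$ is a Riemann map onto the component of $\{\lambda\in\Cb:p+\lambda v\in D\}$ containing $0$ with $\eta(0)=0$; then $h^{(\epsilon)}(\Db^2)\subset D$ by convexity. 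Projection by $\ell$ gives $\ell\circ h^{(\epsilon)}(z,w) = \epsilon\ell(p)(1+\eta(w))$, and scale-invariance of the Kobayashi distance on the right half plane under multiplication by $\epsilon\ell(p)>0$ yields the $w$-direction lower bound $K_D(h^{(\epsilon)}(z_1,w_1), h^{(\epsilon)}(z_2,w_2))\to K_\Db(w_1, w_2)$ as $\epsilon\to 0$, once $\eta$ is chosen so that $w\mapsto 1+\eta(w)$ approximates a Cayley biholomorphism to the right half plane. The parallel $z$-direction bound $K_D(h^{(\epsilon)}(z_1,w), h^{(\epsilon)}(z_2,w))\to K_\Db(z_1,z_2)$ follows from a transverse projection adapted to the tangent direction of $\varphi$, coupled with Proposition~\ref{prop:finite_dist} applied to the boundary limits $\varphi(z_1),\varphi(z_2)\in\partial D$ to control the distance.

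\emph{Transfer to $\Omega$.} For each $n$ I would select $\epsilon_n\to 0$ and $r_n\uparrow 1$ so that the compact set $h^{(\epsilon_n)}\bigl(\overline{r_n\Db}\times\overline{r_n\Db}\bigr)\subset D$ is contained in $A_n\Omega$ (Observation~\ref{obs:compact_inclusion}), and define $f_n(z,w) = A_n^{-1}\circ h^{(\epsilon_n)}(r_n z, r_n w):\Db^2\to\Omega$. Theorem~\ref{thm:dist_convergence} together with the diagonal choice $(\epsilon_n, r_n)$ then gives $K_\Omega(f_n(p_1), f_n(p_2))\to K_{\Db^2}(p_1,p_2)$ locally uniformly on $\Db^2\times\Db^2$.

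\emph{Main obstacle.} The principal technical difficulty is controlling the boundary behavior on both sides. In $(1)\Rightarrow(2)$, verifying the $z$-direction asymptotic isometry for $h^{(\epsilon)}$ is delicate: as $\epsilon\to 0$ the Kobayashi distance $K_D(h^{(\epsilon)}(z_1,w), h^{(\epsilon)}(z_2,w))$ could a priori stay strictly below $K_\Db(z_1,z_2)$ or blow up, and hitting $K_\Db(z_1,z_2)$ exactly requires pairing Proposition~\ref{prop:finite_dist} with a carefully chosen transverse projection adapted to $\varphi$. In $(2)\Rightarrow(1)$, the corresponding obstacle is promoting the pointwise boundary limits into a genuinely non-constant holomorphic map $\Db\to\partial D$, which combines Proposition~\ref{prop:finite_dist} with a Hurwitz-type argument on the converging family $g_n$.
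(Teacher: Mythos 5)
Your overall architecture (Frankel normalization, convergence of Kobayashi distances, Proposition~\ref{prop:finite_dist}) matches the paper's, but both directions have a genuine gap at exactly the point where the paper brings in its two key technical inputs.

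For $(2)\Rightarrow(1)$: after extracting the limit map $g:\Db\times\Db\to D$ (which, as you note, is an isometric embedding and hence injective), the crux is producing \emph{two distinct} boundary points at finite Kobayashi distance. Your plan is to send $z\to 1$ radially and take boundary limits of $g(z,0)$ and $g(z,w_0)$, but for a fixed angle and a fixed subsequence of radii these two limits may well coincide, and Proposition~\ref{prop:finite_dist} then gives nothing; the appeal to ``Hurwitz-style rigidity'' does not address this, since Hurwitz-type statements concern interior limits of holomorphic functions, not radial boundary values. The paper closes this gap with a Fatou-type lemma: the radial limits of the two slices $\varphi_j=g(\cdot,w_j)$ exist almost everywhere on $\partial\Db$ (after conjugating $D$ into $\Hc^d$ and composing with Cayley maps, so that Fatou's theorem applies), and the Luzin--Privalov radial uniqueness theorem shows the two radial limit functions cannot agree almost everywhere, since $\varphi_1\neq\varphi_2$. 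That uniqueness theorem is the missing ingredient; without it you cannot guarantee a single angle at which the two boundary values differ.

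For $(1)\Rightarrow(2)$: your convex-combination ansatz $h^{(\epsilon)}(z,w)=(1-\epsilon)\varphi(z)+\epsilon(p+\eta(w)v)$ does not produce the correct limit metric. Already for $D=\Db\times\Db$, $\varphi(z)=(z,1)$, $p=0$, $v=(0,-1)$, $\eta=\mathrm{id}$, one computes $h^{(\epsilon)}(z,w)=((1-\epsilon)z,\,1-\epsilon(1+w))$, and as $\epsilon\to 0$ the second coordinate sees the blow-up of $\Db$ at the boundary point $1$, so $K_D(h^{(\epsilon)}(z,w_1),h^{(\epsilon)}(z,w_2))\to K_{\{\Real>0\}}(1+w_1,1+w_2)$, which is strictly less than $K_\Db(w_1,w_2)$. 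Repairing this by choosing $\eta$ so that $1+\eta$ is a Cayley map destroys the containment $p+\eta(w)v\in D$, so convexity no longer places $h^{(\epsilon)}$ inside $D$. The $z$-direction lower bound is in even worse shape: you need a linear functional vanishing on the full complex $2$-plane spanned by the disk direction and $v$ whose image of $D$ is exactly a half-plane, and nothing in the ansatz provides it; Proposition~\ref{prop:finite_dist} is a statement about boundary structure and cannot supply a lower bound on $K_D$. The paper avoids all of this by first invoking the rescaling theorem \cite[Theorem 7.4]{GZ2020} to replace $D$ by a domain $D_2$ in the orbit closure with $D_2\cap(\Cb^2\times\{0\})=\Hc^2\times\{0\}$; once the slice is exactly $\Hc^2$, two linear projections onto $\Hc$ give the exact lower bound $\max\{K_\Hc(z_1,w_1),K_\Hc(z_2,w_2)\}$ and hence a genuine isometric copy of $\Db\times\Db$ in $D_2$, which is then transferred to $\Omega$ by your (correct) $A_n^{-1}\circ(\cdot)\circ r_n$ device. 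Some substitute for that rescaling step is needed; the direct construction inside the original $D$ does not work.
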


The rest of the section is devoted to the proof of Theorem~\ref{thm:asym_embedding}.

\subsection{(1) implies (2)} Suppose $\Omega \subset \Cb^d$ is a $\Cb$-properly convex domain and there exists 
\begin{align*}
D\in \overline{\Aff(\Cb^d) \cdot \Omega}^{\Xb_d}
\end{align*}
with non-simple boundary. 

Let $\Hc= \{ z \in \Cb : {\rm Im}(z) > 0\}$. Then by~\cite[Theorem 7.4]{GZ2020} there exists 
\begin{align*}
D_2\in \overline{\Aff(\Cb^d) \cdot D}^{\Xb_d} \subset \overline{\Aff(\Cb^d) \cdot \Omega}^{\Xb_d}
\end{align*}
with 
\begin{align*}
D_2\cap \left(\Cb^2 \times\{(0,\dots,0)\} \right) = \Hc^2 \times \{(0,\dots,0)\}.
\end{align*}

We will construct an isometric embedding of $\Db \times \Db$ into $D_2$ and use it to show that $\Db \times \Db$ asymptotically isometrically embeds into $\Omega$.

\begin{lemma} If $z,w \in \Hc^2$, then
\begin{align*}
K_{D_2}\Big((z,0,\dots,0),(w,0,\dots,0)\Big) = K_{\Hc^2}(z,w).
\end{align*}
\end{lemma}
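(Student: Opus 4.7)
For the upper bound $K_{D_2}(\iota(z),\iota(w)) \leq K_{\Hc^2}(z,w)$, where $\iota(z_1,z_2) := (z_1,z_2,0,\ldots,0)$ is the affine inclusion $\Hc^2 \hookrightarrow D_2$, I invoke the Kobayashi distance-decreasing property of the holomorphic map $\iota$.

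For the lower bound, the strategy is to construct, for each $k\in\{1,2\}$, a holomorphic map $F_k : D_2 \to \Hc$ satisfying $F_k\circ\iota = \pi_k$, where $\pi_k$ denotes projection of $\Hc^2$ onto its $k$-th coordinate. Given such $F_k$, distance decreasing yields $K_\Hc(z_k,w_k) \leq K_{D_2}(\iota(z),\iota(w))$, and the product formula $K_{\Hc^2}(z,w) = \max\{K_\Hc(z_1,w_1), K_\Hc(z_2,w_2)\}$ then completes the argument.

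To build $F_1$, fix $p_0 := (x_0,y_0,0,\ldots,0)$ with $x_0\in\Rb$ and $y_0\in\Hc$. The slice hypothesis forces $p_0\notin D_2$, and the sequence $(x_0 + \tfrac{\sqrt{-1}}{n},y_0,0,\ldots,0)\in\Hc^2\times\{0\}^{d-2}\subset D_2$ converges to $p_0$, so $p_0\in\partial D_2$. By Hahn--Banach there is a real-linear $\ell:\Cb^d\to\Rb$ with $\ell>0$ on $D_2$ and $\ell(p_0)=0$. The zero set $\{\ell=0\}\cap(\Cb^2\times\{0\}^{d-2})$ must be a proper real-affine subspace of $\Cb^2\times\{0\}^{d-2}$ (otherwise $\ell$ would vanish on $\Hc^2\times\{0\}^{d-2}\subset D_2$), hence is a real hyperplane in the slice that supports $\Hc^2\times\{0\}^{d-2}$ at $p_0$. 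Since $(x_0,y_0)$ is a smooth boundary point of $\Hc\times\Hc$, the unique supporting real hyperplane there is $\{\Imag(z_1)=0\}$; after positive rescaling this forces
\[
\ell(z_1,\ldots,z_d) = \Imag(z_1) + g(z_3,\ldots,z_d)
\]
for some real-linear $g:\Cb^{d-2}\to\Rb$ with $g(0)=0$.

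Every real-linear functional on a complex vector space is the imaginary part of a unique $\Cb$-linear functional, so I write $g = \Imag(G)$ for some $\Cb$-linear $G:\Cb^{d-2}\to\Cb$. Then $F_1(z) := z_1 + G(z_3,\ldots,z_d)$ is holomorphic, satisfies $\Imag(F_1) = \ell > 0$ on $D_2$, hence maps $D_2$ into $\Hc$; moreover $F_1\circ\iota = \pi_1$ since $G(0)=0$. Construct $F_2$ analogously, starting from a point $(x_0',y_0',0,\ldots,0)$ with $x_0'\in\Hc$ and $y_0'\in\Rb$. The main obstacle is the supporting-hyperplane identification, i.e. pinning down the restriction of $\ell$ to $\Cb^2\times\{0\}^{d-2}$ as the tangent hyperplane of $\Hc^2$ at $p_0$; once that is in hand, the complexification $g = \Imag(G)$ and the application of distance decreasing are formal.
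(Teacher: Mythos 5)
Your proof is correct and follows essentially the same route as the paper: the upper bound by distance-decreasing of the inclusion, and the lower bound by producing a holomorphic (complex-linear) functional $D_2\to\Hc$ that restricts to the $k$-th coordinate projection on the slice $\Hc^2\times\{0\}$, then invoking the product formula for $K_{\Hc^2}$. The only cosmetic difference is that you obtain this functional from a supporting hyperplane at a smooth boundary point $p_0$ and then identify it, whereas the paper separates $D_2$ from the real-affine set $\Rb e_1+\Cb e_2$ directly; both yield the same map.
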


\begin{proof} By the distance non-increasing property of the Kobayashi metric, we clearly have
\begin{align*}
K_{D_2}\Big((z,0,\dots,0),(w,0,\dots,0)\Big) \leq K_{\Hc^2}(z,w).
\end{align*}
for all $z,w \in \Hc^2$. 

Let $e_1,\dots,e_d$ be the standard basis of $\Cb^d$. By hypothesis, $(\Rb e_1 + \Cb e_2)\cap D_2 =\emptyset$. Then since $D_2$ is convex, there exists a real hyperplane $H$ such that $H \cap D_2 = \emptyset$ and $(\Rb e_1 + \Cb e_2) \subset H$. Then let $\rho: \Cb^d \rightarrow \Cb$ be the complex linear function with $\rho(z,0,\dots,0)=z$ and $\rho^{-1}(\Rb) = H$. Then $\rho(D_2)=\Hc$ and 
\begin{align*}
\rho^{-1}(0)=H \cap iH \supset \Cb e_2.
\end{align*}
Hence 
\begin{align*}
K_{D_2}\Big( (z_1,z_2,0,\dots,0),(w_1,w_2,0,\dots,0) \Big) & \geq K_{\Hc} \Big(\rho(z_1,z_2,0,\dots,0),\rho(w_1,w_2,0,\dots,0) \Big) \\
& = K_{\Hc}(z_1,w_1)
\end{align*}
for all $(z_1,z_2),(w_1,w_2) \in \Hc^2$. 

Applying the same argument to the second variable shows that 
\begin{align*}
K_{D_2}\Big( (z_1,z_2,0,\dots,0),(w_1,w_2,0,\dots,0) \Big)  \geq K_{\Hc}(z_2,w_2)
\end{align*}
for all $(z_1,z_2),(w_1,w_2) \in \Hc^2$. 

Hence
\begin{align*}
K_{D_2}\Big( (z_1,z_2,0,\dots,0),& (w_1,w_2,0,\dots,0) \Big) \geq \max\left\{ K_{\Hc}(z_1,w_1),K_{\Hc}(z_2,w_2)\right\} \\
& = K_{\Hc^2} \Big((z_1,z_2),(w_1,w_2)\Big)
\end{align*}
for all $(z_1,z_2),(w_1,w_2) \in \Hc^2$. 
\end{proof}

By the previous lemma there exists a holomorphic map $f : \Db \times \Db \rightarrow D_2$ with 
\begin{align*}
K_{\Db \times \Db}(z,w) = K_{D_2}(f(z),f(w))
\end{align*}
for all $z,w \in \Db \times \Db$. Since 
\begin{align*}
D_2\in  \overline{\Aff(\Cb^d) \cdot \Omega}^{\Xb_d},
\end{align*}
there exist affine automorphisms $A_n \in \Aff(\Cb^d)$ with $A_n \Omega \rightarrow D_2$. Using Observation~\ref{obs:compact_inclusion} and passing to a subsequence we can suppose that 
\begin{align*}
f( r_n \cdot \Db \times \Db) \subset A_n \Omega
\end{align*}
where $r_n = 1-1/n$. Next define
\begin{align*}
f_n &: \Db \times \Db \rightarrow \Omega \\
f_n&(z) = A_n^{-1}f(r_nz).
\end{align*}
Then by Theorem~\ref{thm:dist_convergence}
\begin{align*}
K_{\Db \times \Db}(z,w) 
&= K_{D_2}(f(z),f(w))=\lim_{n \rightarrow\infty}K_{D_2}(f(r_nz),f(r_nw))\\
& = \lim_{n \rightarrow\infty}K_{A_n\Omega}(f(r_nz),f(r_nw))\\
& = \lim_{n \rightarrow\infty}K_\Omega(f_n(z),f_n(w))
\end{align*}
locally uniformly on $\Db \times \Db$. Hence $\Db \times \Db$ asymptotically isometrically embeds into $\Omega$.

\subsection{(2) implies (1)} We will use the following lemma. 

\begin{lemma}\label{lem:Fatou} Suppose $\Omega$ is a $\Cb$-properly convex domain and $\varphi : \Db \rightarrow \Omega$ is holomorphic. Then there exists a measurable function $\wh{\varphi} : \partial \Db \rightarrow \overline{\Omega}$ such that: 
\begin{align*}
\wh{\varphi}(e^{i\theta}) = \lim_{r \nearrow 1} \varphi(re^{i\theta})
\end{align*}
for almost every $e^{i\theta} \in \partial \Db$. Moreover, if $\varphi_1,\varphi_2 : \Db \rightarrow \Omega$ are holomorphic and $\wh{\varphi}_1 = \wh{\varphi}_2$ almost everywhere, then $\varphi_1 = \varphi_2$. 
\end{lemma}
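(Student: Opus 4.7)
The plan is to reduce the statement to the classical Fatou theorem on $\Db$ by post-composing $\varphi$ with finitely many complex linear functionals whose images lie in a half-plane. As the key preparation, I claim that since $\Omega$ is $\Cb$-properly convex, the set
\[
S := \{\ell \in (\Cb^d)^* : \Real \ell \text{ is bounded above on } \Omega\}
\]
spans $(\Cb^d)^*$ as a complex vector space. Granting this, one selects $\Cb$-linearly independent $\ell_1, \dots, \ell_d \in S$, so that each image $\ell_j(\Omega)$ is contained in an open half-plane $\Hc_j \subset \Cb$.

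To prove the spanning claim, fix $v \in \Cb^d \setminus \{0\}$ and $p_0 \in \Omega$. Since $\Omega$ is $\Cb$-properly convex, the complex line $p_0 + \Cb v$ is not contained in $\Omega$, so $A_v := (\Omega - p_0) \cap \Cb v$ is a proper convex open subset of the complex line $\Cb v \cong \Cb$. Convex separation inside $\Cb v$ yields a nonzero complex linear functional $\lambda \colon \Cb v \to \Cb$ with $\Real \lambda$ bounded above on $A_v$. A standard application of the Hahn--Banach extension theorem for convex sets then produces a real linear functional $u$ on $\Cb^d$ with $u|_{\Cb v} = \Real \lambda$ and $u$ bounded above on $\Omega - p_0$. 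The complex linear functional $\ell(z) := u(z) - i u(iz)$ satisfies $\ell|_{\Cb v} = \lambda$ and lies in $S$; in particular $\ell(v) \neq 0$. Hence no nonzero $v$ is annihilated by all of $S$, so $S$ spans $(\Cb^d)^*$.

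Now fix Möbius transformations $M_j \colon \Hc_j \to \Db$ and set $g_j := M_j \circ \ell_j \circ \varphi \colon \Db \to \Db$. Each $g_j$ is bounded holomorphic, so by the classical Fatou theorem it has radial limits almost everywhere on $\partial\Db$. By the standard boundary uniqueness theorem for bounded holomorphic functions (any bounded holomorphic function on $\Db$ whose radial boundary values equal a fixed constant on a set of positive measure must itself be that constant), and since each $g_j$ takes values in the open disc $\Db$, the set where $g_j$ radially attains the boundary value $M_j(\infty) \in \partial\Db$ has measure zero. Undoing $M_j$, this yields a finite radial limit $y_j(e^{i\theta}) \in \Cb$ for almost every $e^{i\theta}$. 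Since the $\ell_j$ form a basis, the linear map $L \colon \Cb^d \to \Cb^d$, $v \mapsto (\ell_1(v), \dots, \ell_d(v))$, is an isomorphism, and setting
\[
\wh{\varphi}(e^{i\theta}) := L^{-1}\bigl(y_1(e^{i\theta}), \dots, y_d(e^{i\theta})\bigr)
\]
defines a measurable function satisfying $\wh{\varphi}(e^{i\theta}) = \lim_{r \nearrow 1} \varphi(re^{i\theta})$ almost everywhere; the limit lies in $\overline{\Omega}$ because each $\varphi(re^{i\theta}) \in \Omega$.

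For the uniqueness statement, if $\wh{\varphi}_1 = \wh{\varphi}_2$ almost everywhere, then for each $j$ the bounded holomorphic functions $M_j \circ \ell_j \circ \varphi_1$ and $M_j \circ \ell_j \circ \varphi_2$ have matching boundary values a.e.\ and hence coincide on $\Db$, so $\ell_j \circ \varphi_1 = \ell_j \circ \varphi_2$ for every $j$, and invertibility of $L$ forces $\varphi_1 = \varphi_2$. The main obstacle is the Hahn--Banach argument producing enough $\Cb$-linearly independent functionals $\ell_j$ with half-plane images; once that is in place, the remainder is a routine invocation of classical one-variable boundary theory.
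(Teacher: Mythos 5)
Your proof is correct and follows essentially the same route as the paper: embed $\Omega$ (up to a linear/affine change of coordinates) into a product of half-planes, transport to the polydisk via M\"obius maps, apply Fatou's theorem coordinatewise, and use the radial uniqueness theorem for bounded holomorphic functions both to rule out boundary limits ``at infinity'' and to prove the uniqueness statement. The only cosmetic difference is that you derive the half-plane embedding from scratch via Hahn--Banach, whereas the paper cites it (Bracci--Saracco, or Frankel) and packages the $d$ functionals into a single map $F:\Omega\hookrightarrow\Db^d$, excluding the measure of the bad set with one product function instead of coordinate by coordinate.
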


Lemma~\ref{lem:Fatou} is a simple consequence of Fatou's Theorem and the Luzin-Privalov radial uniqueness theorem. Delaying the proof of the Lemma until Section~\ref{sec:pf_of_fatou} below, we prove that (2) implies (1) in Theorem~\ref{thm:asym_embedding}. 

Suppose $\Omega \subset \Cb^d$ is a $\Cb$-properly convex domain and there exists a sequence $f_n : \Db \times \Db \rightarrow \Omega$ of holomorphic maps such that 
\begin{align*}
K_{\Db \times \Db}(z,w)=\lim_{n\rightarrow\infty} K_\Omega( f_n(z),f_n(w))
\end{align*}
locally uniformly on $\Db \times \Db$. 

Using Theorem~\ref{thm:frankel} and passing to a subsequence, we can select affine maps $A_n \in \Aff(\Cb^d)$ such that $A_n(\Omega, f_n(0))$ converges to some $(D,z_0)$ in $\Xb_{d,0}$. Then using Theorem~\ref{thm:dist_convergence} and the Arzel\`a-Ascoli theorem, we can pass to a subsequence so that $A_n f_n: \Db \times \Db \rightarrow A_n\Omega$ converges locally uniformly to a holomorphic map $f: \Db \times \Db \rightarrow D$. Then by Theorem~\ref{thm:dist_convergence}
\begin{align*}
K_{\Db \times \Db}(z,w)
&=\lim_{n\rightarrow\infty} K_\Omega( f_n(z),f_n(w))=\lim_{n\rightarrow\infty} K_{A_n\Omega}( A_nf_n(z),A_nf_n(w))\\
& = K_D(f(z),f(w))
\end{align*}
for all $z,w \in \Db \times \Db$.
In particular, $f$ is injective. 

Now fix $w_1,w_2 \in \Db$ distinct and consider the functions $\varphi_1,\varphi_2 : \Db \rightarrow D$ defined by $\varphi_j(\cdot) = f(\cdot,w_j)$. By Lemma~\ref{lem:Fatou} there exist measurable functions $\wh{\varphi}_j : \partial \Db \rightarrow \overline{D}$ and a set $A \subset \partial \Db$ of full measure such that
\begin{align*}
\wh{\varphi}_j(e^{i\theta}) = \lim_{r \nearrow 1} \varphi_j(re^{i\theta})
\end{align*}
for $j=1,2$ and $e^{i\theta} \in A$. Since $f$ is an isometric embedding, we must have $\wh{\varphi}_j(e^{i\theta})  \in \partial D$ when $e^{i\theta} \in A$.

Since $f$ is injective, $\varphi_1 \neq \varphi_2$. So there exists some $e^{i\theta} \in A$ with $\wh{\varphi}_1(e^{i\theta})\neq\wh{\varphi}_2(e^{i\theta})$. However, 
\begin{align*}
 \limsup_{r \nearrow 1} & \ K_{D}(\varphi_1(re^{i\theta}),\varphi_2(re^{i\theta}) ) =  \limsup_{r \nearrow 1} \ K_{\Db \times \Db}\left( (re^{i\theta},w_1) (re^{i\theta},w_2) \right) \\
 &= K_{\Db}(w_1,w_2) < +\infty.
\end{align*}
So by Proposition~\ref{prop:finite_dist}, if $L$ is the complex line containing $\wh{\varphi}_1(e^{i\theta}), \wh{\varphi}_2(e^{i\theta})$, then $L \cap \partial D$ has non-empty interior in $L$. Hence $D$ has non-simple boundary.  

\subsection{Proof of Lemma~\ref{lem:Fatou}}\label{sec:pf_of_fatou}

Suppose $\Omega$ is a $\Cb$-properly convex domain and $\varphi : \Db \rightarrow \Omega$ is holomorphic.

Again let $\Hc= \{ z \in \Cb : {\rm Im}(z) > 0\}$. 

\begin{lemma} There exists an affine map $A \in \Aff(\Cb^d)$ such that $A \Omega \subset \Hc^d$. \end{lemma}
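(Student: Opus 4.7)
The plan is to realize $\Omega$ as an intersection of half-spaces of the form $\{\Imaginary \rho > c\}$ for $\Cb$-linear $\rho$, then use $\Cb$-proper convexity to extract $d$ linearly independent such $\rho$'s that assemble into the desired affine map.

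First I would observe that every $\Rb$-linear functional $L : \Cb^d \to \Rb$ can be uniquely written as $L = \Imaginary \rho$ for some $\Cb$-linear functional $\rho : \Cb^d \to \Cb$ (take $\rho(z) = -L(iz) + iL(z)$). Hence every open real affine half-space in $\Cb^d$ has the form
\[
H_{\rho,c} := \{z \in \Cb^d : \Imaginary \rho(z) > c\}
\]
for some $\Cb$-linear $\rho \neq 0$ and some $c \in \Rb$. Since $\Omega$ is a non-empty open convex set, a standard Hahn--Banach separation argument shows $\Omega$ equals the intersection of all open half-spaces containing it, and therefore
\[
\Omega = \bigcap_{(\rho,c) \in S} H_{\rho, c}, \qquad S = \{(\rho, c) : \Omega \subset H_{\rho, c}\}.
\]
Let $S_0 \subset (\Cb^d)^*$ denote the projection of $S$ to its first coordinate.

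The key step, and the main obstacle, is to show that $S_0$ spans $(\Cb^d)^*$ as a $\Cb$-vector space; this is where the hypothesis of $\Cb$-proper convexity enters. Suppose for contradiction that $\Span_{\Cb} S_0 \subsetneq (\Cb^d)^*$. Then there is a non-zero vector $v \in \Cb^d$ with $\rho(v) = 0$ for every $\rho \in S_0$. Fix $z_0 \in \Omega$. For every $\lambda \in \Cb$ and every $(\rho, c) \in S$ we have $\Imaginary \rho(z_0 + \lambda v) = \Imaginary \rho(z_0) > c$, so $z_0 + \lambda v$ lies in every half-space of $S$, hence in $\Omega$. Thus the complex affine line $\{z_0 + \lambda v : \lambda \in \Cb\}$ is contained in $\Omega$, contradicting the hypothesis that $\Omega$ is $\Cb$-properly convex.

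Once the spanning property is in hand, I would finish by choosing $\rho_1, \dots, \rho_d \in S_0$ that are $\Cb$-linearly independent, together with witnessing constants $c_1, \dots, c_d \in \Rb$ satisfying $\Omega \subset H_{\rho_j, c_j}$. Define the affine map
\[
A(z) = \bigl( \rho_1(z) - i c_1, \ \dots, \ \rho_d(z) - i c_d \bigr).
\]
The linear part $z \mapsto (\rho_1(z), \dots, \rho_d(z))$ is an invertible $\Cb$-linear map by the linear independence of the $\rho_j$, so $A \in \Aff(\Cb^d)$. For any $z \in \Omega$, the $j$-th coordinate of $A(z)$ has imaginary part $\Imaginary \rho_j(z) - c_j > 0$, so $A(z) \in \Hc^d$, which gives $A\Omega \subset \Hc^d$ as required.
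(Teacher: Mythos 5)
Your argument is correct in structure and essentially self-contained, whereas the paper simply cites the literature (Bracci--Saracco, Frankel) for this lemma; the standard proofs in those references run along the same lines as yours (separating half-spaces, promoted to complex-linear functionals, with $\Cb$-proper convexity guaranteeing enough independent functionals), so you have in effect supplied the proof the paper omits. One small correction: your explicit formula $\rho(z) = -L(iz) + iL(z)$ is conjugate-linear, not $\Cb$-linear (check: it satisfies $\rho(iz) = -i\rho(z)$). The correct formula recovering a $\Cb$-linear functional from its imaginary part $L$ is $\rho(z) = L(iz) + iL(z)$, which indeed satisfies $\rho(iz) = i\rho(z)$ and $\Imaginary\rho = L$. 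With that sign fixed, every step goes through: the Hahn--Banach representation of the open convex set $\Omega$ as an intersection of open half-spaces $H_{\rho,c}$ is valid, the spanning claim for $S_0$ is exactly where $\Cb$-proper convexity is used (a nontrivial common kernel of the $\rho$'s would put a complex affine line inside $\Omega$), and assembling $d$ independent functionals into the affine map $A$ lands $\Omega$ in $\Hc^d$ as required.
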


\begin{proof} See for instance~\cite[Proposition 3.5]{BS2009} or~\cite{F1991}. \end{proof}

Consider the maps
\begin{align*}
f : \Hc \rightarrow \Db \\
f(z) = \frac{z-i}{z+i}
\end{align*}
 and $F := (f,\dots, f) \circ A : \Omega \hookrightarrow \Db^d$.  Notice that $F$ extends to a locally bi-Lipschitz homeomorphism of $\overline{\Omega}$ onto its image. Moreover, if 
 \begin{align*}
 Z = \{ (z_1,\dots,z_d) : z_j = 1 \text{ for some } j \},
 \end{align*}
 then 
 \begin{align*}
 \lim_{\norm{z} \rightarrow \infty} d_{\Euc}( F(z),Z) = 0.
 \end{align*}
 
Next consider $\phi:=F \circ \varphi : \Db \rightarrow \Db^d$. Then by Fatou's theorem, there exists a measurable function $\wh{\phi} : \partial \Db \rightarrow \overline{\Db^d}$ such that: 
\begin{align*}
\wh{\phi}(e^{i\theta}) = \lim_{r \nearrow 1} \phi(re^{i\theta})
\end{align*}
for almost every $e^{i\theta} \in \partial \Db$. By modifying $\wh{\phi}$ on a set of measure zero, we can assume that $\wh{\phi}(\partial \Db) \subset F(\overline{\Omega}) \cup Z$. 

\begin{lemma} The set $B=\left\{ e^{i\theta} : \wh{\phi}(e^{i\theta})\in Z\right\}$ has measure zero in $\partial \Db$. \end{lemma}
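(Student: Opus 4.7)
The plan is to reduce the statement to a coordinatewise condition on bounded holomorphic functions on the disk and then invoke the Luzin--Privalov radial uniqueness theorem. Write $\phi = (\phi_1,\dots,\phi_d)$, where $\phi_j = f \circ (A\varphi)_j : \Db \to \Cb$. Because $A\varphi$ maps $\Db$ into $A\Omega \subset \Hc^d$ and $f$ carries $\Hc$ into $\Db$, each $\phi_j$ is a bounded holomorphic function with $\phi_j(\Db) \subset \Db$. By Fatou's theorem the radial boundary function $\wh{\phi_j}$ exists almost everywhere on $\partial\Db$, and by the definition of $Z$ we have
\begin{align*}
B = \bigcup_{j=1}^d B_j, \quad \text{where} \quad B_j := \left\{e^{i\theta} \in \partial\Db : \wh{\phi_j}(e^{i\theta}) = 1\right\}.
\end{align*}
So it suffices to show that each $B_j$ has Lebesgue measure zero.

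Fix $j$ and consider the bounded holomorphic function $g_j := 1 - \phi_j$ on $\Db$. Its radial boundary function vanishes precisely on $B_j$. If $B_j$ had positive measure, then the Luzin--Privalov radial uniqueness theorem (the same classical result invoked in the proof of Lemma~\ref{lem:Fatou}) would force $g_j \equiv 0$, i.e., $\phi_j \equiv 1$ on $\Db$. But $\phi_j(\Db) \subset \Db$, so this is impossible; hence $B_j$ has measure zero, and summing over $j$ gives the claim.

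The only point that requires any care is identifying the correct classical uniqueness theorem to apply; after the coordinatewise reduction, no property of $\Omega$ or of $\varphi$ beyond the range condition $\phi_j(\Db) \subset \Db$ is needed, and the argument is immediate. I therefore do not anticipate a genuine obstacle here.
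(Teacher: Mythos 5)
Your proof is correct and takes essentially the same approach as the paper: both reduce the statement to the Luzin--Privalov radial uniqueness theorem applied to bounded holomorphic functions on $\Db$ that detect the coordinate condition $z_j=1$. The only cosmetic difference is that the paper applies Luzin--Privalov once to the single product $g(z)=\prod_{j=1}^d\left(\pi_j(\phi(z))-1\right)$, whereas you decompose $B=\bigcup_j B_j$ and apply it $d$ times to the factors $1-\phi_j$.
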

  
\begin{proof} Suppose for a contradiction that $B$ has positive measure in $\partial \Db$. Let $\pi_j: \Cb^d \rightarrow \Cb$ be the projection onto the $j^{th}$ coordinate. Consider the map 
\begin{align*}
g&: \Db \rightarrow \Cb \\
g&(z) = \prod_{j=1}^d \left( \pi_j(\phi(z)) - 1 \right). 
\end{align*}
Then $g$ is nowhere vanishing since $\phi(\Db) \subset \Db^d$. However 
\begin{align*}
0= \prod_{j=1}^d \left( \pi_j(\wh{\phi}(z)) - 1 \right)= \lim_{r \nearrow 1} g(re^{i\theta})
\end{align*}
for  $e^{i\theta} \in B$. Since $B$ has positive measure and $g$ is bounded, the Luzin-Privalov radial uniqueness theorem implies that $g \equiv 0$. So we have a contradiction. 
\end{proof} 

Then by modifying $\wh{\phi}$ on a set of measure zero, we can assume that $\wh{\phi}(\partial \Db) \subset F(\overline{\Omega})$. Then $\wh{\varphi} := F^{-1} \circ \wh{\phi}$ is well defined and satisfies the lemma.
  
  The ``moreover'' part of the lemma follows from applying the dominated convergence theorem and the Cauchy integral formula to the functions $F \circ \varphi_j$ (or the Luzin-Privalov radial uniqueness theorem). 
  
\section{Proof of Proposition~\ref{prop:bd_convex_char}}

Proposition~\ref{prop:bd_convex_char} is an immediate consequence of Theorem~\ref{thm:asym_embedding} and the following result from~\cite{Z2019b}.

\begin{theorem}\cite[Theorem 1.5]{Z2019b}   Suppose $\Omega \subset \Cb^d$ is a bounded convex domain. Then the following are equivalent:
\begin{enumerate}
\item  $(\Omega, K_\Omega)$ is Gromov hyperbolic,
\item every domain in $\overline{\Aff(\Cb^d) \cdot \Omega}^{\Xb_d}$ has simple boundary.
\end{enumerate}
\end{theorem}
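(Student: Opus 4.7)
The plan is to prove the two implications separately, leveraging the Gromov-product reformulation of Section~\ref{sec:prelim_gromov} together with Frankel's compactness theorem (Theorem~\ref{thm:frankel}), the local uniform convergence of Kobayashi metrics under affine limits (Theorem~\ref{thm:dist_convergence}), and the finite-distance-to-boundary criterion (Proposition~\ref{prop:finite_dist}).

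For $(1)\Rightarrow(2)$ I argue contrapositively: if some $D$ in the affine closure of $\Omega$ has non-simple boundary, then by Theorem~7.4 of~\cite{GZ2020} (used just as in Section~3) I pass to a further limit $D_2$ in the affine closure containing $\Hc^2\times\{(0,\dots,0)\}$ as a slice, and the lemma from Section~3 shows that $(z_1,z_2)\mapsto(z_1,z_2,0,\dots,0)$ is a Kobayashi isometric embedding of $\Hc\times\Hc$ into $D_2$. Since $\Hc\times\Hc$ admits Gromov-product violations of arbitrary magnitude (take the four corners of a deep product box in the two factors), so does $D_2$. Fixing $n$, I choose four witness points in $D_2$ whose Gromov-product violation is at least $n$; by Observation~\ref{obs:compact_inclusion} a compact neighborhood of them sits inside the affine translate $A_m\Omega$ once $m$ is large, and by Theorem~\ref{thm:dist_convergence} the Kobayashi distance $K_{A_m\Omega}$ converges uniformly there to $K_{D_2}$. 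The same four points therefore witness a violation of order $n-o(1)$ inside $A_m\Omega$, and since $A_m$ is an affine isometry they also witness one inside $\Omega$. Hence $\Omega$ is not Gromov hyperbolic.

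For $(2)\Rightarrow(1)$, again contrapositively: if $\Omega$ is not Gromov hyperbolic, the Gromov-product characterization gives $o_n,x_n,y_n,z_n\in\Omega$ with
\[
(x_n | y_n)_{o_n}\geq \min\{(x_n | z_n)_{o_n},(y_n | z_n)_{o_n}\}+n.
\]
By Theorem~\ref{thm:frankel} I pick $A_n\in\Aff(\Cb^d)$ so that $(A_n\Omega,A_n o_n)$ converges in $\Xb_{d,0}$ to some $(D_\infty,o_\infty)$, then replace all four points by their $A_n$-images (Kobayashi distances and Gromov products are unchanged). On geodesic segments from $o_n$ to $x_n$ and $y_n$ I pick the parameters at which the two segments first separate by a fixed distance; these yield points $p_n,q_n$ with $K_{A_n\Omega}(o_n,p_n),K_{A_n\Omega}(o_n,q_n)\to\infty$ while $K_{A_n\Omega}(p_n,q_n)$ stays bounded. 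The large Gromov-product gap forces this separation to occur strictly before the segment to $z_n$ ever fellow-travels, so after passing to subsequences $p_n,q_n$ converge to distinct points $p_\infty,q_\infty\in\partial D_\infty$ with $\liminf K_{D_\infty}(p_n,q_n)<\infty$. Proposition~\ref{prop:finite_dist} then forces the complex line through $p_\infty$ and $q_\infty$ to lie in $\partial D_\infty$ with nonempty interior in that line, producing a holomorphic disk in $\partial D_\infty$ and hence a domain with non-simple boundary in the affine closure.

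The main obstacle is the second direction, and specifically the claim that the subsequential limits $p_\infty$ and $q_\infty$ really are distinct boundary points rather than collapsing to the same one. The Gromov-product violation must be exploited in a convex-geometry-aware way: one chooses the separation time along the $x_n$- and $y_n$-geodesics carefully so that the third point $z_n$ stays uniformly far from both, then uses this to prevent $p_\infty$ and $q_\infty$ from coinciding at a single boundary point. Carrying this out rigorously requires a visibility/fellow-traveling lemma specific to $\Cb$-properly convex domains, asserting that almost-geodesics from a common base point that eventually go to distinct boundary points must diverge in a controlled way; this is where the bulk of the technical work from~\cite{Z2019b} is needed.
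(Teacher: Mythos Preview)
The paper does not prove this theorem; it is quoted verbatim as \cite[Theorem 1.5]{Z2019b} and used as a black box in Section~4. So there is no in-paper proof to compare against, and your proposal is really an attempt to reconstruct the argument of \cite{Z2019b}.

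Your $(1)\Rightarrow(2)$ direction is fine and is essentially the standard ``limits of Gromov hyperbolic spaces are Gromov hyperbolic'' argument, combined with the isometric slice $\Hc^2\hookrightarrow D_2$ from Section~3. This works.

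Your $(2)\Rightarrow(1)$ direction has a concrete error and a genuine gap. First, the displayed inequality is reversed: negating the four-point condition from Section~\ref{sec:prelim_gromov} gives points with
\[
(x_n|y_n)_{o_n}\ \leq\ \min\{(x_n|z_n)_{o_n},(y_n|z_n)_{o_n}\}-n,
\]
not $\geq$. With the correct sign it is $(x_n|y_n)_{o_n}$ that may be small (possibly bounded), while both $(x_n|z_n)_{o_n}$ and $(y_n|z_n)_{o_n}$ are at least $n$. Consequently your choice of $p_n,q_n$ along the $x_n$- and $y_n$-geodesics ``where they first separate'' is based on the wrong picture: those two geodesics need not fellow-travel at all. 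It is the $z_n$-geodesic that fellow-travels each of them for a long time, and one has to normalize at a carefully chosen point (typically deep along one of these geodesics, not at $o_n$) before passing to a limit. Second, even after fixing the sign, the step you flag as the obstacle---showing the two subsequential limits are \emph{distinct} boundary points with bounded Kobayashi distance between approximants---is the entire content of the theorem in \cite{Z2019b}; it requires a substantial analysis of quasi-geodesics in convex domains and does not follow from anything stated in the present paper. As written, your sketch for this direction is not a proof but a description of what one would like to be true.
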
 

\section{A result of Benoist}

Recall, a convex domain $C \subset \Rb^d$ is called \emph{strictly convex} if $\partial C$ does not contain any non-trivial line segments. A result of Benoist implies the following characterization of convex domains in $\Rb^d$ with Gromov hyperbolic Hilbert metric. 

\begin{theorem}[{Benoist~\cite[Proposition 1.6]{B2003}}] \label{thm:benoist_thm}Suppose $C \subset \Rb^d$ is a $\Rb$-properly convex domain. Then the following are equivalent:
\begin{enumerate}
\item $(C,H_C)$ is Gromov hyperbolic,
\item every domain in $\overline{\Aff(\Rb^d) \cdot C}^{\Xb_d}$ is strictly convex.
\end{enumerate}
\end{theorem}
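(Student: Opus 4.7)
The plan is to prove the two implications separately, using two standard tools: (a) the Hilbert distance is preserved by affine automorphisms of $\Rb^d$, so every element of an orbit $\Aff(\Rb^d)\cdot C$ is isometric to $(C,H_C)$; and (b) the Hilbert distance is continuous with respect to the local Hausdorff topology on $\Xb_d$, the direct analogue of Theorem~\ref{thm:dist_convergence} for the Kobayashi metric. Together these imply that $\delta$-hyperbolicity of $(C,H_C)$ with a fixed constant passes to every $D\in \overline{\Aff(\Rb^d)\cdot C}^{\Xb_d}$.

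For $(1)\Rightarrow(2)$ I would argue by contrapositive. Suppose some $D\in \overline{\Aff(\Rb^d)\cdot C}^{\Xb_d}$ fails strict convexity, so $\partial D$ contains a non-trivial segment $[p,q]$. By the observation above $(D,H_D)$ would still be $\delta$-hyperbolic with the same constant, so it suffices to show that no $\Rb$-properly convex domain with a segment in its boundary has Gromov hyperbolic Hilbert metric. To see this, intersect $D$ with a $2$-dimensional affine plane containing $[p,q]$ and a transverse interior direction, reducing to a planar properly convex domain whose boundary contains $[p,q]$. Using the cross-ratio formula one constructs a quasi-isometric embedding of a flat Euclidean strip $\Rb\times [0,1]$ into this slice, and the existence of such a strip inside $(D,H_D)$ forbids Gromov hyperbolicity.

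For $(2)\Rightarrow(1)$ I would argue by contradiction, using the Gromov product characterization from Section~\ref{sec:prelim_gromov}. Assume $(C,H_C)$ is not Gromov hyperbolic; then for every $n\geq 1$ there exist $o_n,x_n,y_n,z_n\in C$ with
\begin{align*}
(x_n|y_n)_{o_n} \leq \min\{(x_n|z_n)_{o_n},(z_n|y_n)_{o_n}\} - n.
\end{align*}
Apply Theorem~\ref{thm:frankel} to find affine maps $A_n\in \Aff(\Rb^d)$ so that $(A_n C, A_n o_n)$ converges in $\Xb_{d,0}$ to some $(D,o)$; by hypothesis $D$ is strictly convex. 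Passing to subsequences, the points $A_n x_n, A_n y_n, A_n z_n$ converge in a suitable projective compactification to limits $\xi,\eta,\zeta$, each necessarily in $\partial D$ (otherwise the Gromov inequality would survive the limit with bounded defect by continuity of $H$). The precise form of the defect then singles out a pair, say $\xi$ and $\eta$, together with approach sequences whose mutual Hilbert distance stays bounded while their distances to $o$ diverge. A Hilbert-metric analogue of Proposition~\ref{prop:finite_dist}, which follows directly from the cross-ratio formula for $H_D$, then forces the open segment $(\xi,\eta)$ to lie in $\partial D$, contradicting strict convexity.

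The hard part is the bookkeeping in the second implication: one must verify that the failure of the Gromov inequality genuinely produces a boundary segment in $D$ rather than a degenerate configuration, for instance one where two of the limit points coincide or where sequences escape in directions giving no useful boundary information. The compactness afforded by Theorem~\ref{thm:frankel}, together with the fact that the normalized basepoint $o$ lies in the limit $D$, is precisely what makes it possible to track the three points and extract a boundary segment from the growing defect.
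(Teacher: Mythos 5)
Your opening reduction --- affine invariance of the Hilbert distance plus its continuity in the local Hausdorff topology, so that the four-point $\delta$-hyperbolicity inequality passes with the same constant to every domain in $\overline{\Aff(\Rb^d)\cdot C}^{\Xb_d}$ --- is exactly the paper's first step. But both implications then develop gaps. For $(1)\Rightarrow(2)$, the obstruction you invoke is not an obstruction: a quasi-isometrically (even isometrically) embedded flat strip $\Rb\times[0,1]$ of bounded width is quasi-isometric to $\Rb$ and is perfectly compatible with Gromov hyperbolicity --- indeed $\Rb\times[0,1]$ with the Euclidean metric is itself $1$-hyperbolic. To kill hyperbolicity you need flats of unbounded width (a half-plane) or a sequence of geodesic triangles whose thinness constants diverge. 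There is also a comparison issue in your planar reduction: for a $2$-plane $P$ one has $H_{D\cap P}\geq H_D$ on $D\cap P$, so a fat configuration in the slice need not be fat in $(D,H_D)$. The paper sidesteps all of this by quoting Karlsson--Noskov (Theorem~\ref{thm:KN}), which says precisely that Gromov hyperbolicity of the Hilbert metric forces strict convexity.

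For $(2)\Rightarrow(1)$, the decisive step --- ``the precise form of the defect singles out a pair $\xi,\eta$ with approach sequences whose mutual Hilbert distance stays bounded'' --- is unjustified and false in general. The violated inequality gives $(x_n|z_n)_{o_n}\geq n$ and $(z_n|y_n)_{o_n}\geq n$, which says only that $H_C(x_n,z_n)$ is much smaller than $H_C(o_n,x_n)+H_C(o_n,z_n)$; all three pairwise distances may still diverge, so no Hilbert analogue of Proposition~\ref{prop:finite_dist} applies and no boundary segment is produced. The paper avoids this by working with the thin-triangle definition instead: it takes $u_n\in[x_n,y_n]$ with $H_C(u_n,[x_n,z_n]\cup[z_n,y_n])\geq n$, normalizes via Theorem~\ref{thm:frankel} at the basepoint $u_n$, and then uses strict convexity of the limit $C_\infty$ in the direction opposite to yours: the limit open segment $(x_\infty,z_\infty)$, a priori contained in $\overline{C_\infty}$, must lie in $C_\infty$, so a point $v$ on it is the limit of points $v_n\in[x_n,z_n]$ at finite Hilbert distance from $u_\infty$, contradicting $H_C(u_n,v_n)\geq n$. (One must also handle limit points on the sphere at infinity, since $C_\infty$ may be unbounded; the paper does this by using $\Rb$-proper convexity to guarantee that at least one endpoint of the relevant segment is finite.) If you insist on the Gromov-product formulation, you would first have to convert the defect back into a fat triangle, at which point you are running the paper's argument.
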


To be precise, Theorem~\ref{thm:benoist_thm} is stated differently than~\cite[Proposition 1.6]{B2003}, however the proof is identical. For the reader's convenience we   provide the complete argument. 

\subsection{Preliminaries} We begin by recalling some basic facts about the Hilbert distance. 

Suppose $C \subset \Rb^d$ is a convex domain. Given $x,y \in C$ distinct let $L_{x,y}$ be the real line containing them  and let $a,b \in \partial C \cup \{\infty\}$ be the endpoints of $\overline{C} \cap L_{x,y}$ with the ordering $a,x,y,b$. Then define the \emph{Hilbert pseudo-distance between $x,y$} to be
\begin{align*}
H_C(x,y) = \frac{1}{2} \log \frac{ \norm{x-b}\norm{y-a}}{ \norm{y-b}\norm{x-a}}
\end{align*}
where we define 
\begin{align*}
\frac{ \norm{x-\infty}}{\norm{y-\infty}}=\frac{ \norm{y-\infty}}{\norm{x-\infty}}=1.
\end{align*}
In the case when $C$ does not contain any affine real lines (i.e. $C$ is a $\Rb$-properly convex domain), we see that $H_C(x,y) > 0$ for all $x,y \in C$ distinct. 

We will use the following well known fact about geodesics in the Hilbert metric, for a proof see~\cite[Proposition 2]{dlH1993}.

\begin{proposition} If $C \subset \Rb^d$ is a $\Rb$-properly convex domain and $x,y \in C$, then the Euclidean line segment $[x,y]$ joining $x$ and $y$ can be parametrized to be a unit speed geodesic in $(C,H_C)$. Moreover, if $C$ is strictly convex, then this is the only unit speed geodesic joining $x$ to $y$. 
\end{proposition}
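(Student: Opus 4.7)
The plan is to reduce both statements to a single additivity identity for the Hilbert distance along Euclidean segments. Concretely, for collinear points $x,z,y\in C$ with $z$ on the Euclidean segment from $x$ to $y$, I would first show
\begin{align*}
H_C(x,y) = H_C(x,z) + H_C(z,y).
\end{align*}
This is a direct cross-ratio calculation: the boundary points $a,b\in\partial C\cup\{\infty\}$ determining each of the three Hilbert distances are the \emph{same} (they depend only on the line $L_{x,y}$, not on the pair used to define them), so the logarithms of the three cross-ratios combine telescopically. Additivity implies that the function $t\mapsto H_C(x,(1-t)x+ty)$ is continuous and strictly increasing from $0$ to $H_C(x,y)$ on $[0,1]$, and reparametrizing by its inverse yields the desired unit-speed geodesic along $[x,y]$. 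This proves the first assertion.

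For uniqueness under strict convexity, suppose $\sigma:[0,T]\to C$ is another unit-speed geodesic from $x$ to $y$. If there existed $t\in(0,T)$ with $z:=\sigma(t)$ not on the line $L_{x,y}$, then from $\sigma$ being a unit-speed geodesic one gets $H_C(x,z)+H_C(z,y)=t+(T-t)=T=H_C(x,y)$, i.e.\ equality in the triangle inequality for the non-collinear triple $x,z,y$. Thus it suffices to establish the \emph{strict} triangle inequality for $H_C$ at every non-collinear triple. I would first reduce to dimension two: for the affine $2$-plane $P$ spanned by $x,y,z$, every line through two points of $C\cap P$ lies in $P$, so its endpoints on $\partial C$ coincide with its endpoints on $\partial(C\cap P)$. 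Hence $H_C|_{C\cap P}=H_{C\cap P}$, and $C\cap P$ inherits strict convexity of $\partial C$.

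In the $2$-dimensional strictly convex setting the strategy is an enlargement argument: let $L_1,L_2$ be the lines through $\{x,z\}$ and $\{z,y\}$, meeting $\partial C$ at $a_1,b_1$ and $a_2,b_2$ respectively, and let $L$ meet $\partial C$ at $a,b$. Build a \emph{non-strictly convex} enclosure $C^{*}\supset C$ whose boundary coincides with $\partial C$ at the six points $a,b,a_1,b_1,a_2,b_2$ but contains the line segments $[a,a_1,a_2]$ and $[b,b_1,b_2]$ obtained by joining them along the chords of $\partial C$ cut out by $L_1,L_2$; equivalently, one can take the intersection of appropriate half-planes supporting $C$ at these six points. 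By construction $H_{C^{*}}(x,y)=H_C(x,y)$ (same endpoints on $L$), and the additivity computed in Paragraph~1 applies to $C^{*}$ along the broken path $x\to z\to y$, giving $H_{C^{*}}(x,z)+H_{C^{*}}(z,y)=H_{C^{*}}(x,y)$. Strict convexity of $\partial C$ forces the boundary points defining $H_C(x,z)$ and $H_C(z,y)$ to lie \emph{strictly inside} the corresponding chords of $C^{*}$, so $H_C(x,z)>H_{C^{*}}(x,z)$ and $H_C(z,y)>H_{C^{*}}(z,y)$, yielding $H_C(x,z)+H_C(z,y)>H_C(x,y)$. The main obstacle is constructing the enclosure $C^{*}$ so that the additivity in $C^{*}$ is exact along $x\to z\to y$; this requires choosing the supporting structure near $a,b$ correctly so that the cross-ratios along $L_1,L_2$ in $C^{*}$ telescope with the cross-ratio along $L$ in $C^{*}$, which is where the projective/cross-ratio geometry of the triangle $\{x,y,z\}$ does the work.
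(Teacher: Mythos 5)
The paper does not prove this proposition itself (it cites de la Harpe), so I will assess your argument on its own terms. Your first half is correct: for collinear $a,x,z,y,b$ the three cross-ratios share the same chord endpoints $a,b$ and telescope to give $H_C(x,z)+H_C(z,y)=H_C(x,y)$, and reparametrizing $[x,y]$ by Hilbert arclength then satisfies the paper's definition of a unit-speed geodesic. Your reduction of uniqueness to the strict triangle inequality at non-collinear triples, and the further reduction to the planar section $C\cap P$, are also sound (you should additionally dispose of the cases where $\sigma(t)$ lies on $L_{x,y}$ but outside $[x,y]$, and where $\sigma$ maps into $[x,y]$ with a different parametrization; both follow from additivity and the strict monotonicity of $H_C(x,\cdot)$ along the segment).

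The genuine gap is the planar comparison argument, and the construction of $C^{*}$ as you describe it cannot work. First, if $C$ is strictly convex and $C^{*}\supset C$ is convex, then the open segments $(a,a_1)$ and $(a,a_2)$ lie in $C\subset \interior C^{*}$, so they cannot lie in $\partial C^{*}$: no convex enclosure of $C$ has the broken path $[a_1,a]\cup[a,a_2]$ in its boundary (and $a,a_1,a_2$ are in general not collinear, so $[a,a_1,a_2]$ is not a segment). Second, the alternative you offer --- intersecting supporting half-planes of $C$ at the six points --- yields a convex polygon in which $a,b,a_1,b_1,a_2,b_2$ are still exactly the chord endpoints of $L$, $L_1$, $L_2$, so all three Hilbert distances are unchanged and no strict inequality is produced. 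The step you defer to ``projective/cross-ratio geometry'' is precisely the missing content, and the standard way to supply it does not use an enclosure at all: order the endpoints so that $a_1,x,z,b_1$ lie on $L_1$ and $a_2,z,y,b_2$ on $L_2$, set $P=L(a_1,a_2)\cap L(b_1,b_2)$, $v=L(a_1,a_2)\cap L_{x,y}$, $w=L(b_1,b_2)\cap L_{x,y}$; the perspectivities from $P$ taking $L_1$ to $L_2$ to $L_{x,y}$ preserve cross-ratios and give
\begin{align*}
H_C(x,z)+H_C(z,y)=\frac{1}{2}\log\frac{\norm{x-w}\norm{y-v}}{\norm{y-w}\norm{x-v}}.
\end{align*}
Since $a_1$ and $a_2$ lie on opposite sides of $L_{x,y}$, $v$ lies on the open chord $(a_1,a_2)$, which by strict convexity is contained in $C$; hence $v\in(a,x)$ strictly, and likewise $w\in(y,b)$ strictly, so the right-hand side strictly exceeds $H_C(x,y)$. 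Without this identity (or an equivalent substitute) your uniqueness proof is incomplete.
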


Using the definition of the Hilbert distance is is not difficult to observe that the Hilbert distance is continuous on $\Xb_d$. 

\begin{observation}\label{obs:Hilbert_metric_conv} If $C_n$ converges to $C$ in $\Xb_d$, then
\begin{align*}
H_C =\lim_{n \rightarrow \infty} H_{C_n}
\end{align*}
uniformly on compact subsets of $C \times C$. 
\end{observation}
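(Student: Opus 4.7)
The plan is to reduce the observation to two ingredients: (i) pointwise continuity of the endpoints of $\overline{C}\cap L_{x,y}$ as $C$ varies in $\Xb_d$, and (ii) a uniform Lipschitz bound of $H_{C_n}$ against the Euclidean metric on compact subsets of $C$, for $n$ large. Fix a compact $K\subset C\times C$. By Observation~\ref{obs:compact_inclusion}, after discarding finitely many indices we may assume $K\subset C_n\times C_n$, so each $H_{C_n}$ is well-defined on $K$.

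For pointwise convergence at distinct $x,y\in C$, let $L=L_{x,y}$ and let $a,b\in L\cup\{\infty\}$ be the endpoints of $\overline{C}\cap L$ ordered as $a,x,y,b$; define $a_n,b_n$ analogously using $C_n$. I would verify that $a_n\to a$ and $b_n\to b$ in the one-point compactification of $L$. If $b$ is finite, then any point just beyond $b$ on $L$ lies in the open set $\Rb^d\setminus\overline{C}$, hence outside $\overline{C_n}$ for large $n$ by local Hausdorff convergence; and any point strictly between $y$ and $b$ is in $C$, hence in $C_n$ eventually by Observation~\ref{obs:compact_inclusion}. If $b=\infty$, then $y+R(y-x)/\norm{y-x}\in C$ for every $R>0$, hence in $C_n$ eventually, forcing $\norm{b_n}\to\infty$. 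A symmetric argument handles $a_n$. The cross-ratio formula defining $H_C$, read with the convention $\norm{\cdot-\infty}/\norm{\cdot-\infty}=1$, is continuous under this convergence, so $H_{C_n}(x,y)\to H_C(x,y)$.

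To upgrade to uniform convergence on $K$, I would establish equicontinuity. Let $K_0\subset C$ be a compact convex set whose interior contains the image of $K$ under both coordinate projections, and choose $\rho>0$ such that the $\rho$-Euclidean-neighborhood of $K_0$ is contained in $C$. By Observation~\ref{obs:compact_inclusion}, this $\rho$-neighborhood is also contained in $C_n$ for large $n$. Consequently, for each $x\in K_0$ and each unit vector $v$, the segment $\{x+sv:\abs{s}\leq\rho\}$ lies in $C_n$, so the Hilbert--Finsler length element $\tfrac{1}{2}(1/t_++1/t_-)$ (where $t_\pm$ is the distance from $x$ to $\partial C_n$ along $\pm v$) is bounded above by $1/\rho$. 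Integrating along the Euclidean segment $[x,y]\subset K_0$ yields $H_{C_n}(x,y)\leq\norm{x-y}/\rho$, uniformly in $n\geq N$, and the analogous bound for $H_C$. The triangle inequality then gives equicontinuity of $\{H_{C_n}\}_{n\geq N}$ on $K$; combined with the pointwise convergence of the previous paragraph (extended trivially across the diagonal, where both sides vanish), this produces uniform convergence on $K$ by the standard equicontinuity-plus-pointwise-convergence argument.

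The main obstacle, if any, is the possibility that $\overline{C}\cap L_{x,y}$ is unbounded, which forces the cross-ratio to be interpreted in the one-point compactification of $L$. This complication is mostly cosmetic, provided one tracks the convention for ratios involving $\infty$ uniformly across the sequence; the Lipschitz bound above is insensitive to this, since it depends only on the Euclidean thickness $\rho$ of $K_0$ inside $C$, which is a routine compactness input.
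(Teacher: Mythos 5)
Your proof is correct. The paper offers no argument for this observation (it is simply asserted to follow from the definition of the Hilbert distance), and your two-step argument --- convergence of the endpoints $a_n\to a$, $b_n\to b$ in the one-point compactification of $L_{x,y}$ for pointwise convergence, followed by the uniform Lipschitz bound $H_{C_n}(x,y)\leq\norm{x-y}/\rho$ on a $\rho$-thick compact set to get equicontinuity and hence uniform convergence --- is exactly the standard way to fill in the details, relying only on properties of local Hausdorff convergence that the paper itself uses implicitly.
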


We will also use the following result of Karlsson-Noskov on the boundary of convex domains with Gromov hyperbolic Hilbert metric. 

\begin{theorem}[Karlsson-Noskov~\cite{KN2002}]\label{thm:KN} Suppose $C \subset \Rb^d$ is a $\Rb$-properly convex domain. If $(C, H_C)$ is Gromov hyperbolic, then 
\begin{enumerate}
\item $C$ is strictly convex,
\item $\partial C$ is a $C^1$ hypersurface. 
\end{enumerate}
\end{theorem}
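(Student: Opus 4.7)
The plan is to prove both parts by contrapositive, producing in each case a sequence of geodesic triangles in $(C, H_C)$ whose thinness constants tend to infinity.

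For part (1), assume $[p,q] \subset \partial C$ is a non-trivial line segment, and fix a basepoint $o \in C$. For small $t > 0$, define $x_t = p + t(o-p)$ and $y_t = q + t(o-q)$, which lie in $C$. Since Euclidean chords are $H_C$-geodesics, the sides of the triangle $o, x_t, y_t$ are geodesic. A direct application of the cross-ratio formula yields
\[
H_C(o, x_t) \;=\; -\tfrac{1}{2}\log t + O(1), \qquad H_C(o, y_t) \;=\; -\tfrac{1}{2}\log t + O(1).
\]
On the other hand, because $[p,q] \subset \partial C$, the chord $[x_t, y_t]$ runs parallel to the boundary segment at Euclidean depth $\sim t$, and a cross-ratio computation gives
\[
H_C(x_t, y_t) \;\leq\; \alpha \log(1/t) + O(1)
\]
for some $\alpha < 1$ determined by the local geometry of $\partial C$ at $p$ and $q$. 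Consequently the Gromov product $(x_t \mid y_t)_o \sim \tfrac{1-\alpha}{2} \log(1/t) \to \infty$. Writing $m_t$ for the midpoint of the geodesic $[x_t,y_t]$, the Euclidean distance from $m_t$ to $[o,x_t] \cup [o,y_t]$ is bounded below by a positive multiple of $\norm{p-q}$, while $m_t$ sits at Euclidean depth $\sim t$ from $\partial C$. Standard estimates on the Hilbert Finsler metric then give $H_C(m_t, [o,x_t] \cup [o,y_t]) \gtrsim \log(1/t) \to \infty$, contradicting $\delta$-thinness of the triangle for any fixed $\delta \ge 0$.

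For part (2), I would argue by projective duality. Let $C^*$ denote the dual convex body. It is a classical fact that $\partial C$ is $C^1$ at $p \in \partial C$ if and only if the face of $\partial C^*$ consisting of supporting functionals at $p$ reduces to a single point; equivalently, $C^*$ is strictly convex if and only if $\partial C$ is $C^1$. Using the cross-ratio descriptions of both Hilbert metrics, one checks that Gromov hyperbolicity of $(C, H_C)$ is inherited by $(C^*, H_{C^*})$, via the projective duality between points and hyperplanes, which preserves the relevant cross-ratios up to bounded error and hence preserves the Gromov four-point inequality. Then applying part (1) to $C^*$ shows that $C^*$ is strictly convex, whence $\partial C$ is $C^1$.

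The main obstacle is the cross-ratio estimate $\alpha < 1$ in part (1): this quantitative input is the only place where the hypothesis $[p,q] \subset \partial C$ (as opposed to merely $p,q \in \partial C$) is used, and it must be extracted uniformly in $t$ regardless of the regularity of $\partial C$ near the endpoints. For part (2), the subtlety is that the Hilbert metrics on $C$ and $C^*$ are generally not isometric, so one must verify carefully that the duality transports Gromov hyperbolicity, rather than assume an exact metric equivalence.
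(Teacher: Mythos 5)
The paper does not actually prove this statement: it is imported verbatim from Karlsson--Noskov \cite{KN2002} and used as a black box in the proof of Theorem~\ref{thm:benoist_thm}, so your argument has to stand on its own. It does not, because both quantitative inputs in part (1) fail. For the estimate $H_C(x_t,y_t)\le\alpha\log(1/t)+O(1)$ with $\alpha<1$, take $C=\{(x,y)\in\Rb^2: 0<y<1-x^2\}$, $p=(-1,0)$, $q=(1,0)$, $o=(0,1/2)$. Then $x_t,y_t$ lie on the line $\{y=t/2\}$, whose chord has endpoints $(\pm\sqrt{1-t/2},\,t/2)$, and the cross-ratio gives $H_C(x_t,y_t)=\log(1/t)+O(1)$ while $H_C(o,x_t)=H_C(o,y_t)=\tfrac12\log(1/t)+O(1)$; so $(x_t|y_t)_o=O(1)$ and no $\alpha<1$ exists. (Even where the Gromov product does diverge, that alone is consistent with Gromov hyperbolicity --- it happens in $\Hb^2$ for two sequences tending to the same boundary point --- so it cannot carry the contradiction.) The fallback claim, that Euclidean separation of $m_t$ from the other two sides plus depth $\sim t$ forces $H_C(m_t,[o,x_t]\cup[o,y_t])\gtrsim\log(1/t)$, is also false: the Hilbert metric degenerates in directions parallel to a flat boundary face. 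For $C=(0,1)^2$, $p=(1/4,0)$, $q=(3/4,0)$, $o=(1/2,1/2)$ one computes $H_C(m_t,x_t)\to\tfrac12\log 3$, so $m_t$ stays at bounded Hilbert distance from the vertex $x_t$ even though its Euclidean distance to $[o,x_t]$ is bounded below and its depth is $\sim t$. Thus the triangles you construct need not be fat, and the proof of (1) collapses; the known arguments are structured differently (Karlsson--Noskov exploit the failure of geodesic stability/visibility caused by a boundary segment, and the paper's companion direction in Theorem~\ref{thm:benoist_thm} is handled by a Benz\'ecri-type rescaling rather than explicit triangle estimates).

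For part (2), the step ``Gromov hyperbolicity of $(C,H_C)$ is inherited by $(C^*,H_{C^*})$'' cannot be obtained from ``duality preserves cross-ratios up to bounded error'': projective duality sends points to hyperplanes, there is no natural point map $C\to C^*$, and no elementary quasi-isometry between the two Hilbert geometries is available. The self-duality of Gromov hyperbolicity for Hilbert metrics is a genuine theorem (it follows from Benoist's quasisymmetry characterization of the boundary, which is self-dual) and is at least as deep as the statement you are trying to prove, so as written this step is circular. Karlsson--Noskov instead establish $C^1$-regularity directly at a corner point, using the two distinct supporting hyperplanes there to violate thinness.
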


\subsection{Proof of Theorem~\ref{thm:benoist_thm}}Suppose $C \subset \Rb^d$ is a $\Rb$-properly convex domain.

(1) $\Rightarrow$ (2): Suppose $(C,H_C)$ is Gromov hyperbolic. Then by Observation~\ref{obs:Hilbert_metric_conv} and the Gromov product definition of Gromov hyperbolicity, every domain in $\overline{\Aff(\Rb^d) \cdot C}^{\Xb_d}$ has Gromov hyperbolic Hilbert metric. Hence every domain in the orbit closure is strictly convex by Theorem~\ref{thm:KN}.

(2) $\Rightarrow$ (1): We compactify $\Rb^d$ by adding the sphere at infinity, that is: a sequence $x_n \in \Rb^d$ converges to $\xi \in {\mathbb S}^{d-1}$ if $\norm{x_n} \rightarrow \infty$ and $\frac{1}{\norm{x_n}} x_n \rightarrow \xi$. Then, given $x \in \Rb^d$ and $\xi \in {\mathbb S}^{d-1}$ we can define the intervals
\begin{align*}
(\xi,x) = (x,\xi) = \{ x+ t \xi : t \geq 0\}
\end{align*}
and 
\begin{align*}
(-\xi,\xi)=(\xi,-\xi) = \{ t \xi : t \in \Rb\}.
\end{align*}

By assumption $C$ is strictly convex and hence every geodesic joining two points $x,y \in C$ parametrizes the Euclidean line segment $[x,y]$. Suppose $(C,H_C)$ is not Gromov hyperbolic. Then for each $n \geq 0$ there exist $x_n,y_n,z_n \in C$ and $u_n \in [x_n,y_n]$ such that 
\begin{align*}
H_C(u_n, [x_n,z_n] \cup [z_n,y_n]) \geq n.
\end{align*}
Using Theorem~\ref{thm:frankel}  and passing to a subsequence we can find affine maps $A_n \in \Aff(\Rb^d)$ such that $A_n(C,u_n)$ converges to some $(C_\infty,u_\infty)$ in $\Xb_{d,0}$.  Passing to further subsequences we can assume that $A_nx_n,A_ny_n, A_nz_n$ converges to $x_\infty, y_\infty, z_\infty \in \overline{C}_\infty \cup {\mathbb S}^{d-1}$ respectively. Since 
\begin{align*}
H_C(u_n, \{x_n,y_n, z_n\}) \geq n,
\end{align*}
we must have $x_\infty, y_\infty, z_\infty \in \partial C_\infty \cup {\mathbb S}^{d-1}$.

Since $u_n \in [x_n,z_n] \subset C$ we have $u_\infty \in (x_\infty, y_\infty) \subset C_\infty$. So $x_\infty \neq y_\infty$. Since $C_\infty$ is $\Rb$-properly convex at least one of $x_\infty, y_\infty$ is finite. Then after possibly relabelling we can assume that $x_\infty \neq z_\infty$ and at least one of $x_\infty, z_\infty$ is finite. Then $(x_\infty, z_\infty) \subset \overline{C}_\infty$. Since $C_\infty$ is strictly convex we have $(x_\infty, z_\infty) \subset C_\infty$. 

Now fix $v \in (x_\infty, z_\infty)$. Then there exists $v_n \in (x_n,z_n)$ with $A_n v_n$ converging to $v$. Then 
\begin{align*}
\infty > H_{C_\infty}(v,u_\infty) = \lim_{n \rightarrow \infty} H_{A_n C}(A_nv_n, A_n u_n) = \lim_{n \rightarrow \infty} H_{C}(v_n, u_n) \geq \lim_{n \rightarrow \infty} n = \infty.
\end{align*}
So we have a contradiction and hence $(C,H_C)$ is Gromov hyperbolic. 

\section{Proof of Proposition~\ref{prop:TD_char}}

Proposition~\ref{prop:TD_char} is an immediate consequence of Theorem~\ref{thm:benoist_thm}, Theorem~\ref{thm:asym_embedding}, and the following lemma from~\cite{Z2019b} 

\begin{lemma}\cite[Lemma 19.6]{Z2019b} Suppose $C \subset \Rb^d$ is a $\Rb$-properly convex domain and $\Omega = C + i\Rb^d$. Then 
\begin{align*}
\overline{\Aff(\Cb^d) \cdot \Omega}^{\Xb_d(\Cb)} = \Aff(\Cb^d) \cdot \Big( \overline{\Aff(\Rb^d) \cdot C}^{\Xb_d(\Rb)} + i\Rb^d \Big).
\end{align*}
In particular, the following are equivalent 
\begin{enumerate}
\item every domain in $\overline{\Aff(\Rb^d) \cdot C}^{\Xb_{d}(\Rb)}$ is strictly convex,
\item every domain in $\overline{\Aff(\Cb^d) \cdot \Omega}^{\Xb_{d}(\Cb)}$ has simple boundary. 
\end{enumerate}
\end{lemma}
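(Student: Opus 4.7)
The plan is to establish the displayed equality by two containments, with the ``in particular'' equivalence following once we observe that a tube domain $C'+i\Rb^d$ has simple boundary if and only if $C'$ is strictly convex. For the containment $\supseteq$: any real affine map $M(x)=Lx+t$ extends to a complex affine map $\tilde M(z)=Lz+t$ via the $\Cb$-linear extension of $L$, and $\tilde M\,\Omega=(MC)+i\Rb^d$ since $L(\Rb^d)=\Rb^d$. Moreover, $C_n\to C'$ in $\Xb_d(\Rb)$ implies $C_n+i\Rb^d\to C'+i\Rb^d$ in $\Xb_d(\Cb)$ directly from the definition of the local Hausdorff topology, so the containment follows after applying $\Aff(\Cb^d)$ to both sides.

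For the hard containment $\subseteq$, suppose $D=\lim A_n\Omega$ with $A_n\in\Aff(\Cb^d)$ having linear parts $L_n$. The key observation is that $\Omega$ is invariant under every translation by $i\Rb^d$, so each $A_n\Omega$ is invariant under translations by the real $d$-dimensional subspace $L_n(i\Rb^d)\subset\Cb^d$. After passing to a subsequence, $L_n(i\Rb^d)\to W$ in the real Grassmannian, and the limit $D$ is then invariant under translations by $W$. Since $D$ is $\Cb$-properly convex, $W$ can contain no nontrivial complex subspace, so $W\cap iW=\{0\}$. Using the transitive action of $\GL(\Cb^d)$ on the space of totally real $d$-dimensional subspaces, choose $g\in\GL(\Cb^d)$ with $g(W)=i\Rb^d$; then $gD$ is invariant under all translations by $i\Rb^d$, and hence $gD=C'+i\Rb^d$ where $C':=\Real(gD)\subset\Rb^d$ is $\Rb$-properly convex.

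Since $gL_n(i\Rb^d)\to i\Rb^d$, a local-section argument for the homogeneous space of totally real $d$-dimensional subspaces yields $h_n\in\GL(\Cb^d)$ with $h_n\to\Id$ and $h_ngL_n(i\Rb^d)=i\Rb^d$. The $\Cb$-linear map $\tilde L_n:=h_ngL_n$ then preserves $i\Rb^d$ and hence $\Rb^d$, so it restricts to an element of $\GL(\Rb^d)$. Writing the translation part of $h_ngA_n$ as $x_n+iy_n$ with $x_n,y_n\in\Rb^d$ and absorbing $iy_n$ into the imaginary factor of the tube, one obtains
\begin{align*}
h_ngA_n\Omega=M_n(C)+i\Rb^d,\qquad M_n(c):=\tilde L_n(c)+x_n\in\Aff(\Rb^d).
\end{align*}
Since $h_n\to\Id$ and $gA_n\Omega\to gD=C'+i\Rb^d$, the left-hand side converges to $C'+i\Rb^d$ in $\Xb_d(\Cb)$; passing to real parts of tube domains (a continuous operation for the local Hausdorff topology) gives $M_n(C)\to C'$ in $\Xb_d(\Rb)$. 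Therefore $C'\in\overline{\Aff(\Rb^d)\cdot C}^{\Xb_d(\Rb)}$ and $D=g^{-1}(C'+i\Rb^d)$ lies in the right-hand side.

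Finally, the equivalence in the ``in particular'' statement follows once we check that $C'+i\Rb^d$ has simple boundary if and only if $C'$ is strictly convex: a nontrivial segment $[p,q]\subset\partial C'$ produces the nonconstant holomorphic disk $\zeta\mapsto(1-\zeta)p+\zeta q$ in $\partial(C'+i\Rb^d)$, while a nonconstant holomorphic $\varphi:\Db\to\partial(C'+i\Rb^d)$ has coordinates of $\Real\varphi$ harmonic into $\partial C'$, and convexity of $C'$ forces the image of $\varphi$ to lie in an affine piece of $\partial C'$. I expect the main technical obstacle to be the perturbation step producing $h_n\to\Id$ with $h_ngL_n(i\Rb^d)=i\Rb^d$, together with the subsequent passage from local Hausdorff convergence of tubes in $\Xb_d(\Cb)$ to convergence of bases in $\Xb_d(\Rb)$, both of which require careful handling of the non-locally-compact topology on $\Xb_d$.
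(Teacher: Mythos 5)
Your proposal is correct in substance, but it takes a genuinely different route from the paper: the paper does not prove this lemma at all, citing \cite[Lemma 19.6]{Z2019b} for the displayed equality and deducing the ``in particular'' part from that equality together with the Fu--Straube proposition (holomorphic disks in the boundary of a convex domain exist iff complex affine disks do). You instead give a self-contained argument. Your easy containment is fine, and your hard containment is the right idea and essentially complete: translation-invariance of $\Omega$ under $i\Rb^d$ passes to $A_n\Omega$ under $L_n(i\Rb^d)$, survives the local Hausdorff limit after extracting a convergent subsequence in the Grassmannian, the limit subspace $W$ is totally real of dimension $d$ because $D$ is $\Cb$-properly convex, and the local-section perturbation $h_n\to\Id$ over the (open, homogeneous) space of maximal totally real subspaces legitimately reduces the linear parts to $\GL(d,\Rb)$. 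The passage from convergence of tubes to convergence of bases also checks out, since $(E_n+i\Rb^d)\cap\Rb^d=E_n$ and both halves of the local Hausdorff criterion restrict to $\Rb^d$. What the paper's route buys is brevity and a statement valid for arbitrary convex domains (Fu--Straube); what yours buys is independence from \cite{Z2019b} and an explicit picture of which affine degenerations can occur.

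Two minor repairs are needed in the ``in particular'' step. First, the disk $\zeta\mapsto(1-\zeta)p+\zeta q$ has real part ranging over the segment from $2p-q$ to $q$, which need not lie in $\partial C'$; recenter, e.g.\ use $\zeta\mapsto\tfrac{1}{2}(p+q)+\tfrac{1}{2}\zeta(q-p)$, whose real part stays in $(p,q)\subset\partial C'$. Second, ``convexity of $C'$ forces the image of $\varphi$ to lie in an affine piece'' should be made precise: take a supporting functional $\ell\leq c$ for $C'$ with equality at $\Real\varphi(0)$; then $\ell\circ\Real\varphi$ is harmonic with an interior maximum, hence constant, so $\Real\varphi(\Db)$ lies in the face $\overline{C'}\cap\{\ell=c\}\subset\partial C'$, which yields a nontrivial segment unless $\Real\varphi$ is constant --- and constant real part forces $\varphi$ itself to be constant. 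Both are standard and do not affect the validity of the overall argument.
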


Notice that the ``in particular'' part is a consequence of the first claim and the following result of Fu-Straube. 

\begin{proposition}[{Fu-Straube~\cite[Theorem 1.1]{FS1998}}]Suppose $\Omega \subset \Cb^d$ is a convex domain. Then every holomorphic map $\Db \rightarrow \partial\Omega$ is constant if and only if every complex affine map $\Db \rightarrow \partial\Omega$ is constant. 
\end{proposition}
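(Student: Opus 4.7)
The plan is to prove the non-trivial implication by contrapositive: I would start from a non-constant holomorphic disc $\varphi : \Db \to \partial\Omega$ and produce a non-constant complex affine disc in $\partial\Omega$. The forward direction is immediate since complex affine maps are holomorphic.

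The heart of the argument is an iterated supporting-hyperplane-plus-maximum-principle reduction, slicing $\overline{\Omega}$ by a strictly decreasing chain of complex affine subspaces. First I fix $z_0 \in \Db$, set $p_0 := \varphi(z_0)$, and pick a real supporting hyperplane of $\Omega$ at $p_0$. Every $\Rb$-linear functional on $\Cb^d$ is the real part of a unique $\Cb$-linear one, so this hyperplane has the form $\{\Real \ell_1 = c_1\}$ for some $\Cb$-linear $\ell_1 : \Cb^d \to \Cb$, with $\Real \ell_1 \leq c_1$ on $\Omega$. Then $\Real(\ell_1 \circ \varphi)$ is harmonic on $\Db$, bounded above by $c_1$, and equals $c_1$ at $z_0$, hence is identically $c_1$ by the maximum principle. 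The holomorphic function $\ell_1 \circ \varphi$ therefore has constant real part and so is itself constant, forcing $\varphi(\Db) \subset V_1 := \{\ell_1 = \ell_1(p_0)\}$. Since $V_1 \cap \Omega = \emptyset$, the closed convex set $K_1 := V_1 \cap \overline{\Omega}$ lies in $\partial\Omega$. I iterate: at stage $j$ I have a chain $V_1 \supsetneq \cdots \supsetneq V_j$ of complex affine subspaces of $\Cb^d$ with $\dim_\Cb V_j = d-j$, each disjoint from $\Omega$, and $\varphi(\Db) \subset K_j := V_j \cap \overline{\Omega} \subset \partial\Omega$. If $K_j$ has non-empty interior in $V_j$, then any sufficiently small complex affine disc through an interior point is contained in $K_j \subset \partial\Omega$, producing the desired non-constant complex affine map. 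Otherwise $K_j$ is contained in a proper real affine hyperplane of $V_j$, which again has the form $\{\Real \ell_{j+1} = c_{j+1}\}$ for some $\Cb$-linear $\ell_{j+1} : V_j \to \Cb$, and the same maximum-principle argument forces $\varphi(\Db) \subset V_{j+1} := V_j \cap \{\ell_{j+1} = \ell_{j+1}(p_0)\}$.

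The iteration must hit the good case within $d-1$ steps, because $\dim_\Cb V_j = d-j$ drops by one at each stage, and at the last possible stage $j = d-1$, $V_{d-1} \cong \Cb$ and the non-constant holomorphic map $\varphi : \Db \to K_{d-1} \subset V_{d-1}$ has open image by the open mapping theorem, so $K_{d-1}$ automatically has non-empty interior in $V_{d-1}$. The main obstacle is essentially careful bookkeeping: the key move at each stage is the identification of a real $\Rb$-linear constraint on $V_j$ with a $\Cb$-linear one, which works because $\ell_{j+1} \circ \varphi$ is a holomorphic function on $\Db$ whose real part is forced to be constant. Once this upgrade is set up consistently the rest is just a dimension count, bottoming out in the open mapping theorem in complex dimension one.
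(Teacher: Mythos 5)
Your argument is correct. The paper does not prove this proposition at all --- it is quoted from Fu--Straube and used as a black box --- so there is no internal proof to compare against; but your supporting-hyperplane/maximum-principle descent is essentially the standard argument (and close in spirit to Fu--Straube's original one). The key steps all check out: the supporting hyperplane at $p_0=\varphi(z_0)$ gives a harmonic function $\Real(\ell_1\circ\varphi)\le c_1$ attaining its maximum at an interior point, hence constant, hence $\ell_1\circ\varphi$ constant and $\varphi(\Db)\subset V_1$ with $V_1\cap\Omega=\emptyset$; a convex set with empty interior in $V_j$ lies in a real hyperplane of $V_j$, which is the zero set of $\Real\ell_{j+1}-c_{j+1}$ for a nonzero $\Cb$-affine $\ell_{j+1}$; and the dimension count bottoms out at $\dimension_{\Cb}V_{d-1}=1$, where the open mapping theorem forces $K_{d-1}$ to have interior. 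Two cosmetic remarks: after the first step the maximum principle is not actually needed, since $K_j$ (hence $\varphi(\Db)$) already lies \emph{in} the hyperplane, so $\Real(\ell_{j+1}\circ\varphi)$ is constant outright and you only need that a holomorphic function with constant real part is constant; and the functionals $\ell_{j+1}$ on the affine subspaces $V_j$ should be called $\Cb$-affine rather than $\Cb$-linear. Neither affects the validity of the proof.
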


\bibliographystyle{alpha}
\bibliography{SCV}

\end{document}